\numberwithin{equation}{section}
\newtheorem{letterthm}{Theorem}
\newtheorem{lettercor}[letterthm]{Corollary}
\newtheorem{letterdefinition}[letterthm]{Definition}
\newtheorem{theorem}{Theorem}[section]
\newtheorem{lemma}[theorem]{Lemma}
\newtheorem{corollary}[theorem]{Corollary}
\newtheorem{proposition}[theorem]{Proposition}
\newtheorem{definition}[theorem]{Definition}
\newtheorem{notation}[theorem]{Notation}
\newtheorem{remark}[theorem]{Remark}
\newtheorem{example}[theorem]{Example}
\newcommand{\act}{\curvearrowright}
\newcommand{\Arg}{{\text{Arg}}}
\DeclareMathOperator{\atom}{atom}
\newcommand{\bone}{\mathbf 1}
\newcommand{\cC}{\mathcal C}
\newcommand{\C}{\mathbf C}
\DeclareMathOperator{\comp}{comp}
\newcommand{\cD}{\mathcal D}
\DeclareMathOperator{\diff}{diff}
\DeclareMathOperator{\End}{End}
\newcommand{\F}{\mathbf F}
\newcommand{\fH}{\mathfrak H}
\newcommand{\scrH}{\mathscr H}
\DeclareMathOperator{\Hilb}{Hilb}
\DeclareMathOperator{\Hom}{Hom}
\newcommand{\cI}{\mathcal I}
\DeclareMathOperator{\id}{id}
\DeclareMathOperator{\Ind}{Ind}
\DeclareMathOperator{\Irr}{Irr}
\newcommand{\into}{\hookrightarrow}
\newcommand{\cJ}{\mathcal J}
\newcommand{\fK}{\mathfrak K}
\newcommand{\scrK}{\mathscr K}
\newcommand{\fL}{\mathfrak L}
\newcommand{\scrM}{\mathscr M}
\DeclareMathOperator{\Mod}{Mod}
\newcommand{\N}{\mathbf{N}}
\newcommand{\NI}{\mathscr M}
\newcommand{\NII}{\mathscr N}
\newcommand{\NInd}{\text{Ind-mixing}}
\newcommand{\cO}{\mathcal O}
\DeclareMathOperator{\ob}{ob}
\newcommand{\ot}{\otimes}
\newcommand{\cP}{\mathcal P}
\newcommand{\scrP}{\mathscr P}
\newcommand{\R}{\mathbf{R}}
\DeclareMathOperator{\Rep}{Rep}
\DeclareMathOperator{\res}{res}
\DeclareMathOperator{\FD}{FD}
\DeclareMathOperator{\full}{full}
\DeclareMathOperator{\supp}{supp}
\newcommand{\ti}{\tilde}
\newcommand{\cU}{\mathcal U}
\newcommand{\varep}{\varepsilon}
\newcommand{\fZ}{\mathfrak Z}
\DeclarePairedDelimiterX{\norm}[1]{\lVert}{\rVert}{#1}
\providecommand{\keywords}[1]{\textbf{\textit{Index terms---}} #1}
\begin{document}

	\title[Jones' representations]{A tensor product for representations of the Cuntz algebra and of the R.~Thompson groups}
	\thanks{
		AB is supported by the Australian Research Council Grant DP200100067.\\
		DW is supported by an Australian Government Research Training Program (RTP) Scholarship.}
	\author{Arnaud Brothier and Dilshan Wijesena}
	\address{Arnaud Brothier\\ University of Trieste, Via Weiss, 2 34128 Trieste, Italia and
	School of Mathematics and Statistics, University of New South Wales, Sydney NSW 2052, Australia}
	\email{arnaud.brothier@gmail.com\endgraf
		\url{https://sites.google.com/site/arnaudbrothier/}}
\address{Dilshan Wijesena\\ School of Mathematics and Statistics, University of New South Wales, Sydney NSW 2052, Australia}
	\email{dilshan.wijesena@hotmail.com}
	\maketitle

	\begin{abstract}
The authors continue a series of articles studying certain unitary representations of the Richard Thompson groups $F,T,V$ called Pythagorean. They all extend to the Cuntz algebra $\cO$ and conversely all representations of $\cO$ are of this form. Via this approach we introduce a tensor product for a large class of representations of $F,T,V,\cO$. 
We prove that a sub-category forms a tensor category and perform a number of explicit computations of fusion rules.

	\end{abstract}
	
\keywords{{\bf Keywords:} Richard Thompson's groups, Cuntz algebra, Jones' representations, tensor category.}


\section*{Introduction}

We continue to study Pythagorean representations (in short P-representations) initiated in \cite{Brothier-Jones19,Brothier-Wijesena22,Brothier-Wijesena24,Brothier-Wijesena23}.
Vaughan Jones developed a powerful tool, termed \textit{Jones' technology}, to diagrammatically construct actions of groups \cite{Jones17}.
In particular, from any (linear) isometry $R:\fH\to\fH\oplus\fH$ between Hilbert spaces we can define {\it unitary} representations of the Richard Thompson groups $F,T,V$ \cite{Brothier-Jones19}.
This is a functorial process consisting in enlarging $(R,\fH)$ into $(\Pi(R),\scrH)$ so that $\Pi(R):\scrH\to\scrH\oplus\scrH$ is a {\it surjective} isometry.
The groups $F,T,V$ act on $\scrH$: these are the P-representations.

If $A,B$ are the legs of $R$, then the isometric condition translates into $A^*A+B^*B=\id$. 
This relation defines a universal $C^*$-algebra $\cP:=\cP_2$, with generators denoted $a,b$, called the Pythagorean algebra. 
Hence, any representation $(A,B,\fH)$ of $\cP$ produces a P-representation of the Thompson groups $F,T,V$.
Similarly, the legs of $\Pi(R)$ define a representation of the Cuntz(--Dixmier) algebra $\cO:=\cO_2$, i.e.~the universal $C^*$-algebra generated by $s_0,s_1$ satisfying $s_0s_0^*+s_1s_1^*=1$ and additionally $s_0^*s_0=s_1^*s_1=1$ \cite{Cuntz77,Dixmier64}.
These constructions are functorial giving four Pythagorean functors $\Pi^X:\Rep(\cP)\to\Rep(X)$ (in short P-functors) and are compatible with the Birget--Nekrashevych embedding $V\into\cO$ so that $\Pi^V(R)=\Pi^\cO(R)\restriction_V$, see Section \ref{sec:Cuntz} and \cite{Birget04,Nekrashevych04}.
All representations of $\cO$ arise in that way but not all representations of $F,T,V$ are Pythagorean, see \cite{Brothier-Jones19}.

Studying $R$ to deduce properties of $\Pi^X(R)$ has proven to be extremely powerful.
Though this has a cost: we must add morphisms inside $\Rep(\cP)$ to expect any classification results. We then obtain the category of {\it P-modules} $\Mod(\cP)$ with the same objects as $\Rep(\cP)$; however, morphisms being now bounded linear maps intertwinning the $A$'s and $B$'s but not necessarily their adjoints. 
This makes the representation theory of $\cP$ more subtle. 
A finite-dimensional P-module $m=(A,B,\fH)$ decomposes uniquely into $\fH=\fH_{\comp}\oplus\fH_{\res}$ where the {\it residual subspace} $\fH_{\res}$ is the largest vector subspace not containing any non-trivial subspace closed under $A,B$. 
Then $\fH_{\comp}\subset\fH$ is the {\it complete P-submodule} satisfying $\Pi^X(\fH_{\comp})\simeq \Pi^X(\fH)$. 
Remarkably, the usual dimension $\dim(\fH_{\comp})$ is an invariant of both the P-module $\fH$ and the representation $\Pi^X(\fH)$ (for $X=F,T,V,\cO$) called the {\it Pythagorean dimension} (in short P-dimension) denoted $\dim_P(\fH)=\dim_P(\Pi^X(\fH)).$
Further, $\fH_{\comp}$ decomposes into a finite direct sum of irreducible P-submodules. These latter are either {\it atomic} or {\it diffuse}. The first kind yields monomial representations of $F,T,V$ while the second are Ind-mixing representation (i.e.~they do not contain the induction of a finite-dimensional representation).
Additionally, $\Pi^X$ is fully faithful (up to few exceptions) when restricted to finite-dimensional full P-modules (full means that $\fH_{\res}=\{0\}$). This latter is a consequence of the main results of \cite{Brothier-Wijesena24,Brothier-Wijesena23}. 
In particular, the P-functors send distinct irreducible P-modules to distinct irreducible P-representations.
From there we deduce moduli spaces of P-modules and P-representations. 
Consequently, a large part of the spectrum of $\cO$ has a beautiful geometric structure. This is a complete surprise knowing that $\cO$ is a simple non-type $I$ $C^*$-algebra and thus a priori has a highly pathological representation theory by Glimm's theorem \cite{Glimm61}.

\textbf{Aim of this article.} We have demonstrated that a large class of representations of $F,T,V,\cO$ are very well-behaved. 
We now wish to enrich this picture with {\it fusion}, i.e.~having a tensor product for P-representations.
This is the main topic of this article: we define a novel monoidal product for sub-classes of P-modules and P-representations, and perform a number of explicit computations.

This is a non-trivial task. Indeed, the classical tensor product of Pythagorean representations of the Thompson groups are not Pythagorean. Moreover, there is no theories of tensor product of representations of $C^*$-algebras. 
It is known that $\cO\simeq \cO\otimes \cO$ which defines a tensor product on $\Rep(\cO)$. However, $\cO\simeq \cO\otimes \cO$ being obtained by abstract classification, the resulting tensor product is rather intractable. There is an interesting binary operation studied by Kawamura $\Rep(\cO_n)\times \Rep(\cO_m)\to \Rep(\cO_{nm})$ that we briefly present in Section \ref{sec:Kawamura} using the Pythagorean algebra. However, this is not a tensor product for representations of the single Cuntz algebra $\cO_2$.

Our strategy is to define a novel tensor product $\boxtimes$ (in opposition with the classical tensor product $\otimes$) using $\cP$ and then to push it onto $F,T,V,\cO$ using the P-functors $\Pi^X$.
\begin{letterdefinition} \label{letterdef:tensor-prod-def}
	Let $(A,B,\fH), (\ti A, \ti B, \ti \fH)$ be any two P-modules satisfying
	\[\ker(A \otimes \ti A) \cap \ker(B \otimes \ti B) = \{0\}.\]
	Define the following binary operation $\boxtimes$ that produces another P-module:
		\[(A,B,\fH) \boxtimes (\ti A, \ti B, \ti \fH) = ((A \otimes \ti A)K^{-1}, (B \otimes \ti B)K^{-1}, \fH \otimes \ti \fH) \text{ where } \]	
	\[K := \sqrt{\vert A \vert^2 \otimes \vert \ti A \vert^2 + \vert B \vert^2 \otimes \vert \ti B \vert^2}.\]	
\end{letterdefinition}
We could na\"ively want to directly use this formula at the level of $\cO$ (rather than $\cP$). This always fail as $A,B$ would necessarily be partial isometries with orthogonal domain making the formula of $\boxtimes$ ill-defined.
Definition \ref{letterdef:tensor-prod-def} makes sense for any dimension (using unbounded operators and carefully defining $K^{-1}$). 
However, we have chosen to restrict this article to finite-dimensional $\fH$ where the analysis remains elementary and where our techniques are more powerful.

\begin{center}\textbf{For the remainder of the introduction, all P-modules are finite-dimensional and all P-representations have finite P-dimension.}
\end{center}

The operation $\boxtimes$ preserves equivalence classes of P-modules. It is distributive with respect to the direct sum. Moreover, if we fix co-ordinates and consider P-modules as matrices, then $\boxtimes$ defines smooth operations between manifolds. 
However, it does not preserve {\it fullness} of P-modules in general.
This means that $\boxtimes$ may create residual subspaces. This causes a number of problems such as not being multiplicative for the P-dimension, and not preserving equivalence classes of P-representations once pushed to $\Rep(X)$ for $X=F,T,V,\cO$. In order to obtain a suitable monoidal structure at the level of P-representations we must find a class of \emph{full} P-modules that is closed under $\boxtimes$ and on which the P-functors $\Pi^X:\Mod(\cP)\to\Rep(X)$ are fully faithful.
We suggest two such classes (which may be not optimal, see Remark \ref{rem:tensor-cat}) and observe that the restriction of $\boxtimes$ to them satisfy a number of remarkable properties. We follow here the terminology of \cite{Etingof-Gelaki-Nikshych-Ostrik16} and refer to Section \ref{subsec:intro-tensor-cat} for details.

\begin{letterthm}\label{letterthm:tensor}(Proposition \ref{prop:P-rep-tensor-prod} and Theorem \ref{thm:tensor-category})
Let $\NI$ (resp.~$\NII$) be the class of finite-dimensional P-modules $(A,B,\fH)$ where $A$ is normal (resp.~$A$ is normal and invertible) and $B$ is invertible.
The following statements hold:
\begin{enumerate}
\item $\NI$ and $\NII$ are closed under $\boxtimes$ and are contained in the class of full P-modules;
\item $(\NI, \boxtimes)$ is a $\C$-linear symmetric monoidal category;
\item if $m,\ti m\in \NI$, then $\dim_P(m\boxtimes \ti m)=\dim_P(m)\cdot \dim_P(\ti m)$;
\item $(\NII, \boxtimes)$ is a pivotal unitary tensor category;
\item the quantum dimension of $(\NII, \boxtimes)$ coincides with the P-dimension.
\end{enumerate}
\end{letterthm}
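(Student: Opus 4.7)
My plan is to establish the five items in sequence, since each builds on the previous. For (1), I would first note that $K$ is invertible on $\NI$ by the positivity bound $K^2 \geq |B|^2 \otimes |\ti B|^2$, which is invertible when $B, \ti B$ are. The Pythagorean identity for the new legs is then immediate from $K^{-1} K^2 K^{-1} = \id$. The key technical point is normality of the new $A$-leg $T := (A \otimes \ti A) K^{-1}$: when $A$ is normal, $A$ commutes with $A^*A$ and hence with $B^*B = \id - A^*A$; symmetrically on the tilde side. Thus $K^2$ commutes with $A \otimes \ti A$, yielding $T^*T = K^{-2}(A^*A \otimes \ti A^* \ti A) = TT^*$. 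Invertibility of the new $B$-leg (and of the new $A$-leg in $\NII$) is the product of invertibles. Fullness follows from invertibility of $B$: any nonzero residual $A,B$-stable situation would force a dimension drop incompatible with $B$ being a bijection.

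For (2), I would take the unit to be $\bone := (2^{-1/2}, 2^{-1/2}, \C)$; then $K^2 = (1/2)(A^*A + B^*B) = \id/2$, so $\bone \boxtimes m = m$ strictly. For associativity, a direct computation exploiting that all the $|A_i|^2, |B_i|^2$ on disjoint tensor factors commute shows that both $(m_1 \boxtimes m_2) \boxtimes m_3$ and $m_1 \boxtimes (m_2 \boxtimes m_3)$ carry the identical triple P-module structure with $K_{123}^2 = |A_1|^2 \otimes |A_2|^2 \otimes |A_3|^2 + |B_1|^2 \otimes |B_2|^2 \otimes |B_3|^2$; hence the associator is just the canonical Hilbert-space associator. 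Symmetry is the flip $\tau$, which intertwines thanks to the symmetric form of $K^2$. Linearity over $\C$ is immediate, and direct-sum distributivity holds because $K$ for $(m_1 \oplus m_2) \boxtimes \ti m$ is block-diagonal. Item (3) is then immediate: fullness from (1) gives $\dim_P = \dim$, which is multiplicative under $\otimes$.

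For (4) and (5), I would take the dual $m^\vee := \bar m = (\bar A, \bar B, \bar\fH)$, the complex-conjugate P-module (matrix entries conjugated in an orthonormal basis), which again lies in $\NII$. The coevaluation $\mathrm{coev}_m \colon \bone \to m \boxtimes \bar m$ is to be built from the maximally entangled state $\sum_i e_i \otimes \bar e_i$ rescaled by a suitable positive operator of the form $f(A,B)$ chosen precisely to make $\mathrm{coev}_m$ intertwine the P-module structure, and the evaluation $\mathrm{ev}_m$ is dually constructed. I would verify the snake identities and the triangle axiom in the $\End(\fH)$-picture of $\fH \otimes \bar\fH$, where operators $X \otimes \bar Y$ act as the superoperator $T \mapsto X T Y^*$. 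The pivotal structure is the canonical double-conjugation isomorphism $m \simeq m^{\vee\vee}$, and unitarity comes from the natural $*$-structure on morphisms inherited from $B(\fH)$. For (5), the quantum dimension $\dim_q(m) \in \End_{\NII}(\bone) = \C$ is computed from $\mathrm{ev}_m, \mathrm{coev}_m$, and the pivotal iso; the trace-like behavior of the entangled state collapses the computation to the Hilbert-space trace of $\id_\fH$, giving $\dim_q(m) = \dim \fH = \dim_P(m)$. The hard part will be (4): finding the correct rescaling of the entangled state which makes $\mathrm{coev}_m$ an intertwiner is delicate because of the asymmetry between $A$ and $A^*$, and between $B$ and $B^*$ (recall $B$ is not assumed normal in $\NII$), and verifying the snake identities from this rescaling is the technical heart of the argument.
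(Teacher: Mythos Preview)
Your treatment of closure under $\boxtimes$, associativity, the unit, symmetry, and item (3) is essentially the paper's argument. However, there are three genuine gaps.

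\textbf{Fullness in (1).} Invertibility of $B$ alone does not force fullness. For instance, take $\fH=\C^2$, $a=b=1/\sqrt{2}$, small $t\neq 0$, and
\[
A=\begin{pmatrix} a & t\\ 0 & d\end{pmatrix},\quad
B=\begin{pmatrix} b & -t\\ 0 & f\end{pmatrix}
\]
with $2t^2+d^2+f^2=1$ and $f\neq 0$. Then $B$ is invertible, $\C e_1$ is a P-submodule, $\C e_2$ contains no P-submodule, and the module is not full. The paper's proof uses normality of $A$ essentially: from $Ap=pAp$ one gets $Ap=pA$ (Fuglede), hence $p$ commutes with $|A|$, $|B|$ and $|B|^{-1}$ by functional calculus, and then $B|B|^{-1}$ being unitary (hence normal) gives $Bp=pB$. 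Your sketch bypasses exactly this step.

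\textbf{Functoriality of $\boxtimes$ on morphisms in (2).} A morphism $\theta$ of P-modules intertwines $A,B$ but not, a priori, $A^*,B^*$; yet $K$ is built from $A^*A$ and $B^*B$. You do not explain why $\theta_1\otimes\theta_2$ intertwines $K$ with $\ti K$. The paper invokes a non-trivial prior result: between \emph{full} finite-dimensional P-modules, every intertwiner automatically intertwines adjoints. Without this, $\boxtimes$ is not even a bifunctor on $\NI$.

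\textbf{The dual in (4) is wrong.} Taking $m^\vee=\bar m=(\bar A,\bar B,\bar\fH)$ does not work: for a generic $m\in\NII$ there is no nonzero intertwiner $\bone\to m\boxtimes\bar m$ at all. In an eigenbasis of $|A|$ with eigenvalues $a_i$, the positive part of the $A$-leg of $m\boxtimes\bar m$ acts on $\xi_i\otimes\bar\xi_i$ by $a_i\star a_i=a_i^2/\sqrt{a_i^4+(1-a_i^2)^2}$, which equals $1/\sqrt{2}$ only when $a_i^2=1/2$; no rescaling of the entangled state can repair this because the obstruction is in the \emph{eigenvalues}, not the coefficients. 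Since duals in a rigid category are unique up to isomorphism, this shows $\bar m\not\simeq m^*$ generically. The paper's dual is
\[
m^*=(\widehat U_A\,|\widehat B|,\ \widehat U_B\,|\widehat A|,\ \fH^*),
\]
i.e.~one \emph{swaps} $|A|$ and $|B|$ in the positive parts. The point of this swap is the identity $a\star\sqrt{1-a^2}=1/\sqrt{2}$ for all $a\in(0,1)$, which makes the \emph{unrescaled} Hilbert-space evaluation and coevaluation maps into honest P-module intertwiners. Once you have the standard $\mathrm{ev},\mathrm{coev}$ of $\Hilb$, the zig-zag relations, pivotality, and the equality of quantum dimension with $\dim\fH$ are inherited for free---no delicate rescaling is needed.
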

The monoidal category $(\NII,\boxtimes)$ satisfies all the axioms of a unitary fusion category except that it has infinitely many classes of simple objects (and in fact continuously many). 

We now transport $\boxtimes$ from {\it P-modules} to {\it P-representations} of $X=F,T,V,\cO$ using an equivalence of categories.
Let $\Rep^{\FD}(Y)$, $Y=T,V,\cO$ (resp.~$\Rep^{\FD+}(F)$) be the full sub-category of representation of finite P-dimension (resp.~finite P-dimension and does not contain an atomic representation of dimension one). 
We then define $M^Y:\Rep^{\FD}(Y)\to \Mod(\cP)$ and $\Rep^{\FD+}(F)\to \Mod(\cP)$ that consists in extending the $Y$ or $F$-representation into an $\cO$-representation and then to extract from the $\cO$-representation the smallest complete P-submodule. This is well-defined thanks to the main results of \cite{Brothier-Wijesena24,Brothier-Wijesena23}.

\begin{letterdefinition}\label{letterdef:P-rep}
Consider $X=F,T,V,\cO$ and two P-representations $\pi,\ti\pi$ of $X$ in $\Rep^{\FD+}(F)$ if $X=F$ and in $\Rep^{\FD}(X)$ otherwise.
If $M^X(\pi),M^X(\ti\pi)$ fulfill the assumption of Definition \ref{letterdef:tensor-prod-def}, then we define 
$$\pi\boxtimes \ti \pi:=\Pi^X(M^X(\pi)\boxtimes M^X(\ti\pi)).$$
\end{letterdefinition}

We now further restrict the domain of $M^X$ so that it is closed under $\boxtimes$ obtaining monoidal categories of \emph{P-representations} (see Section \ref{subsec:tens-cat-p-rep}).

\begin{lettercor}\label{lettercor:tensor}
Given $X=F,T,V,\cO$ we write $\Rep_{\NI}(X)$ for the full sub-category with class of objects $\Pi^X(m)$ with $m\in \NI$ (and similarly define $\Rep_{\NII}(X)$).
The pair of functors 
$$(\Pi^F,M^F) \text{ between $\NII$ and $\Rep_{\NII}(F)$}$$
defines an equivalence of categories.
The binary operation $\boxtimes$ of Definition \ref{letterdef:P-rep} provides a monoidal structure on $\Rep_{\NII}(X)$ monoidally equivalent to $(\NII,\boxtimes)$ and thus inherits all the properties enunciated in Theorem \ref{letterthm:tensor}.

A similar statement holds after replacing $F$ by $T,V$ or $\cO$, and $\NII$ with $\NI$ or $\NII$.
\end{lettercor}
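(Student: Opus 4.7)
The plan is to first establish that $(\Pi^X, M^X)$ is an equivalence of categories between $\NII$ and $\Rep_{\NII}(X)$, and then to transport the monoidal structure of $(\NII, \boxtimes)$ from Theorem~\ref{letterthm:tensor} through this equivalence.

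First I would verify that $\Pi^X$ restricted to $\NII$ (or $\NI$, when applicable) is fully faithful. By item (1) of Theorem~\ref{letterthm:tensor} every object of $\NII$ is a full finite-dimensional P-module, so the full faithfulness of $\Pi^X$ on finite-dimensional full P-modules recalled in the introduction and established in \cite{Brothier-Wijesena24, Brothier-Wijesena23} applies directly. In the case $X = F$ the atomic dimension-one exception is absorbed into the restriction of the codomain to $\Rep^{\FD+}(F)$. Essential surjectivity onto $\Rep_{\NII}(X)$ is immediate from the definition of this full subcategory as the essential image of $\NII$ under $\Pi^X$. To see that $M^X$ is a quasi-inverse, note that any $m \in \NII$ is full and therefore coincides with its own complete P-submodule, giving $M^X(\Pi^X(m)) \simeq m$; conversely, writing $\pi = \Pi^X(m)$ for $\pi \in \Rep_{\NII}(X)$ one obtains $\Pi^X(M^X(\pi)) \simeq \Pi^X(m) \simeq \pi$ from the defining property of the complete P-submodule $\fH_{\comp}$. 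Naturality of the resulting unit and counit is forced by full faithfulness of $\Pi^X$ on $\NII$.

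Once the equivalence is in hand, the transport of the monoidal structure is essentially formal. Definition~\ref{letterdef:P-rep} prescribes the tensor product on $\Rep_{\NII}(X)$ to be precisely the conjugate of $\boxtimes$ on $\NII$ by this equivalence. The associator, unitors, symmetry, and (in the $\NII$ case) pivotal and unitarity data provided by Theorem~\ref{letterthm:tensor} transfer via the unit and counit natural isomorphisms to coherent constraints on $\Rep_{\NII}(X)$; the pentagon, triangle and hexagon axioms are preserved because these isomorphisms are natural. One then concludes that $(\Rep_{\NII}(X), \boxtimes)$ is monoidally equivalent to $(\NII, \boxtimes)$ and therefore inherits all properties listed in Theorem~\ref{letterthm:tensor}, including the identification of quantum dimension with P-dimension.

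The main obstacle I anticipate is confirming that $M^X$ genuinely lands in $\NII$ (resp.~$\NI$) rather than merely in the ambient $\Mod(\cP)$. This amounts to showing that normality and invertibility of the legs survive the reconstruction as the smallest complete P-submodule of the $\cO$-extension; this follows from essential uniqueness of full P-modules realising a given P-representation combined with full faithfulness on $\NII$. A minor additional check in the $F$-case is that no $m \in \NII$ produces an atomic dimension-one $F$-representation excluded from $\Rep^{\FD+}(F)$, which is guaranteed by defining $\Rep_{\NII}(F)$ inside $\Rep^{\FD+}(F)$ from the outset. The case of $\NI$ for $X = T, V, \cO$ proceeds identically after substituting $\NI$ for $\NII$ throughout, since for these groups full faithfulness on full P-modules holds without exception.
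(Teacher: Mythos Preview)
Your proposal is correct and follows essentially the same route as the paper: invoke the equivalence of categories $(\Pi^X,M^X)$ on full finite P-dimensional P-modules from \cite{Brothier-Wijesena24,Brothier-Wijesena23}, observe that $\NII$ consists of full (indeed diffuse) P-modules so the equivalence restricts, and then transport the monoidal data through the equivalence. The paper's only addition is that it writes down the transported associator explicitly as $\id_{M_1}\otimes \eta_{M_2\boxtimes M_3}^{-1}\circ a_{M_1,M_2,M_3}\circ \eta_{M_1\boxtimes M_2}\otimes \id_{M_3}$ using the unit $\eta:M^F\circ\Pi^F\Rightarrow\id$, whereas you treat this step abstractly; your concern about $M^X$ landing back in $\NII$ is resolved exactly as you indicate (fullness gives $M^X(\Pi^X(m))\simeq m$), and the atomic dimension-one issue for $F$ is in fact moot since every $m\in\NII$ is diffuse by Proposition~\ref{prop:normal-full-mod}.
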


We consider explicit examples and computations of fusions. 
To do this we use co-ordinates so that P-modules are now matrices. 
We start by considering the dimension one case obtaining a Lie group structure on irreducible classes of diffuse P-representations of P-dimension one. We moreover deduce Lie group actions of moduli spaces of P-modules and P-representations.
Then we compute explicit fusion rules for a number of higher dimensional P-modules and P-representations such as the \emph{generalised permutation} representations of Kawamura \cite{Kawamura05}.

\textbf{Plan of the article.}
Section \ref{sec:prelim} recalls classical facts on $F,T,V$ and $\cO$. Moreover, we recall previous results on the Pythagorean algebra and on P-representations. Finally, we introduce some category terminology.
Section \ref{sec:tensor-product} is the most important where we introduce Definitions \ref{letterdef:tensor-prod-def} and \ref{letterdef:P-rep}, and prove Theorem \ref{letterthm:tensor} and Corollary \ref{lettercor:tensor}.
Section \ref{sec:examples} provides many explicit examples.
Section \ref{sec:Kawamura} presents a previous binary operation on representations of all the Cuntz algebras and re-interpret it in our framework.

\textbf{Acknowledgments.}
The authors warmly thanks Pinhas Grossman for answering questions and providing insightful discussions regarding tensor categories.


\section{Preliminaries} \label{sec:prelim}

We briefly recall classical materials concerning the Richard Thompson groups, the Cuntz(--Dixmier) algebra, the connection between those, and some more recent materials on representations of the Pythagorean algebra.
For extensive details we refer the reader to \cite{Cannon-Floyd-Parry96} for the Thompson groups $F,T,V$, to \cite{Cuntz77,Aiello-Conti-Rossi21} for the Cuntz algebra $\cO$, to \cite{Birget04, Nekrashevych04} for the connection between $F,T,V$ and $\cO$, and to our previous papers \cite{Brothier-Jones19, Brothier-Wijesena22, Brothier-Wijesena24, Brothier-Wijesena23} for materials concerning Pythagorean representations.

{\bf Conventions.} 
The set $\N$ of natural numbers contains $0$ and write $\N^*$ for $\N\setminus\{0\}.$
All Hilbert spaces (often denoted $\fH$ or $\scrH$) are assumed to be over $\C$ with inner-product linear in the first variable.

\subsection{The Richard Thompson groups}
Richard Thompson's group $F$ is the group of homeomorphism on the Cantor space $\cC := \{0,1\}^{\N^*}$ that acts by locally changing finite prefixes and preserving the lexicographic order of $\cC$.
A finite binary word $w$ defines a \emph{cylinder set} $I_w := \{w \cdot x : x \in \cC\} \subset \cC$. 
An element $g \in F$ is determined by two ordered standard partitions of $\cC$ of equal length, $\mathcal I=(I_{v_1}, \dots, I_{v_n})$ and $\mathcal J=(I_{w_1}, \dots, I_{w_n})$, such that $g(v_j \cdot x) = w_j \cdot x$ for all $x \in \cC$. Conversely, any such pair $(\cI,\cJ)$ defines an element of $F$.
Thompson's group $T$ (resp.~$V$) is similarly defined with elements determined by $(\cI,\cJ,\kappa)$ with mapping $g(v_j\cdot x) = w_{\kappa(j)}\cdot x$ with $\kappa$ a cyclic permutation (resp.~a permutation).

\subsection{The Cuntz algebra}\label{sec:Cuntz}
Recall, the \textit{Cuntz algebra} in the universal $C^*$-algebra $\cO := \cO_2$ with two generators $s_0$, $s_1$ satisfying the relations
\[s_0^*s_0 = s_1^*s_1 = s_0s_0^* + s_1s_1^* = 1.\]
By ``universal'' property of $\cO$, all representations of $\cO$ are given by triples $(S_0, S_1, \fH)$ where $S_0$, $S_1$ are bounded linear isometries on a complex Hilbert space $\fH$ such that
\[S_0S_0^* + S_1S_1^* = \id_{\fH}.\]
with $s_i \mapsto S_i$ for $i=0,1$.

\textbf{Embedding Thompson's group $V$ inside $\cO$.}
Birget and Nekrashevych independently made the remarkable discovery that Thompson's group $V$ (and thus $F$ and $T$) embeds inside the unitary group of $\cO$, $\cU(\cO) := \{u \in \cO : u^*u = uu^* = 1\}$ \cite{Birget04, Nekrashevych04}. Specifically, the map
\[\iota : V \rightarrow \cU(\cO) : g \mapsto \sum_{i=1}^n s_{w_i}s_{v_i}^*\]
defines a group homomorphism of $V$ into $\cU(\cO)$ where $g$ is the element in $V$ that maps the cylinder set $I_{v_i}$ to $I_{w_i}$. 
Here, $s_w=s_{i_1}\dots s_{i_k}$ if $w=i_1\dots i_n$ is a binary word.

\subsection{Pythagorean modules}
\subsubsection{The Pythagorean algebra}
The \textit{Pythagorean algebra} $\cP$ is the universal $C^*$-algebra with two generators $a$, $b$ which satisfy the so-called Pythagorean equality
\[a^*a + b^*b = 1.\]
The Cuntz algebra is a quotient of the Pythagorean algebra via the mapping $a\mapsto s_0^*, b\mapsto s_1^*$.
Representations of the $C^*$-algebra $\cP$ are triples $(A,B,\fH)$ such that $A,B \in B(\fH)$ and $A^*A+B^*B = \id_\fH$ where $\fH$ is a Hilbert space. 
We consider intertwinners between such representations which forget the *-structure. 
This forms a category $\Mod(\cP)$ with same class of object as the usual representation category $\Rep(\cP)$ but having more morphisms. 
To avoid confusions we use the term ``module" rather than ``representation".

\begin{definition}
\begin{enumerate}
\item A \textit{Pythagorean module} (in short P-module) is a triple $(A,B,\fH)$ where $A,B \in B(\fH)$, $\fH$ is a Hilbert space and $A,B$ satisfies
\[A^*A + B^*B = \id_{\fH}.\] 
\item A \textit{P-submodule} of $(A,B,\fH)$ is a closed subspace $\fK \subset \fH$ such that $A(\fK), B(\fK) \subset \fK$. 
\item An \textit{intertwinner} between two P-modules $m=(A,B,\fH)$, $\ti m=(\ti A, \ti B, \ti \fH)$ is a bounded linear map $\theta : \fH \rightarrow \ti \fH$ such that 
\[\ti A \circ \theta = \theta \circ A \textrm{ and } \ti B \circ \theta = \theta \circ B.\] 
\item If $\theta$ is a unitary (i.e.~$\theta\circ \theta^*=\id_{\ti \fH}$ and $\theta^*\circ\theta=\id_\fH$) then we say that $m$ and $\ti m$ are {\it unitary equivalent} or simply {\it equivalent}.
\item Write $\Mod(\cP)$ for the category of P-modules with intertwinners for morphisms. 
\end{enumerate}
\end{definition}

We refer the reader to \cite[Section 6]{Brothier-Jones19} and \cite[Section 6]{Brothier-Wijesena23} for examples.

\begin{definition} \label{def:P-mod-def}
	For a P-module $m = (A,B,\fH)$ so that $\fH = \fH_0 \oplus \fZ$ where $\fH_0$ is a P-submodule and $\fZ$ is a subspace not containing any non-trivial P-submodules we say:
	\begin{enumerate}
		\item $\fH_0$ is a \textit{complete} P-submodule of $\fH$;
		\item $\fZ$ is a \textit{residual} subspace of $\fH$;
		\item $\fH$ is \textit{full} if $\fH$ is the only complete P-submodule of $\fH$;
		\item $\fH$ is \textit{irreducible} if $\fH$ does not contain any non-trivial proper P-submodules, otherwise $\fH$ is reducible.
	\end{enumerate}
\end{definition}

\begin{center}
	\textbf{For the remainder of this article we shall assume that all P-modules are \textit{finite-dimensional} unless specified otherwise.}	
\end{center}

\subsubsection{Diffuse, atomic, Pythagorean dimension}\label{sec:decomposition-irreducible}
Fix a P-module $m=(A,B,\fH)$.

{\bf Decomposition of P-modules.}
We have a decomposition 
$$\fH=\fH_{\comp}\oplus \fH_{\res} = \oplus_{j=1}^k \fH_j \oplus \fH_{\res}$$
where $\fH_{\comp}$ is the smallest complete P-submodule of $\fH$, $\fH_{\res}$ is the largest residual subspace of $\fH$, and each $\fH_j$ is an irreducible P-submodule.
We call $\fH_{\comp}$ (resp.~$\fH_{\res}$) \textit{the} complete P-submodule (resp.~\textit{the} residual subspace) of $\fH$.

{\bf Rays and infinite words.}
We interpret elements of the cantor space $\cC$ as \textit{rays}, i.e.~elements of the boundary of the rooted infinite complete binary tree.
Given a ray $p$ we write $p_n$ for the word made of the first $n$ digits of $p$.
The word $p_n$ defines an operator acting on any P-module $m=(A,B,\fH)$ by replacing $0$'s and $1$'s by $A$'s and $B$'s, and by reversing the order (e.g. if $p = 011\dots$ then $p_3$ gives the operator $BBA$).

{\bf Diffuse and atomic P-modules.}
Define $\fH_{\diff}$ to be the subspace of all vectors $\xi \in \fH_{\comp}$ such that $p_n\xi$ tends to $0$ for all rays $p \in \cC$. 
Additionally, define $\fH_{\atom}$ to be the span of all vectors $\eta \in \fH_{\comp}$ such that there exists a ray $p \in \cC$ satisfying $\norm{p_n\eta} = \norm{\eta}$ for all $n \in \N$. 
It is easy to verify that $\fH_{\diff}$ and $\fH_{\atom}$ are P-submodules of $\fH_{\comp}$ which we call the \textit{diffuse} and \textit{atomic} parts, respectively, of $\fH$. Moreover, we have the decomposition:
\[\fH_{\comp} = \fH_{\atom} \oplus \fH_{\diff}.\]

{\bf Pythagorean dimension of a P-module.}
The {\it Pythagorean dimension} (in short P-dimension) $\dim_P(\fH)$ of the P-module $m=(A,B,\fH)$ is equal to the usual dimension $\dim(\fH_{\comp})$ of the complex vector space $\fH_{\comp}$.

\subsection{Pythagorean representations of Thompson's groups and Cuntz's algebra} \label{subsec:p-rep-prelim}
\subsubsection{The Pythagorean functors}
Given a P-module $m=(A,B,\fH)$ there is a systematic construction of a Hilbert space $\scrH$ and two partial isometries $\tau_0,\tau_1$ so that $m$ is a P-submodule of $(\tau_0,\tau_1,\scrH)$ \cite{Brothier-Jones19}.
Moreover, now the map $\xi\mapsto \tau_0\xi\oplus \tau_1\xi$ is surjective and thus is a unitary transformation from $\scrH$ to $\scrH\oplus\scrH$.
We deduce that $(\tau_0,\tau_1,\scrH)$ defines a representation of the Cuntz algebra $\cO$ via: $s_i\mapsto \tau_i^*, i=0,1$.
Using the Birget--Nekrashevych embedding $V\into\cO$ we obtain unitary representations of the Thompson groups $F,T,V$.
This process is functorial and defines four functors:
$$\Pi^X:\Mod(\cP)\to \Rep(X) \text{ for } X=F,T,V,\cO$$
where $\Rep(X)$ is the usual category of unitary representations (resp.~*-representations) for $X = F,T,V$ (resp.~$X = \cO$).
They are called {\it Pythagorean functors} (in short P-functors) and $\Pi^X(m)$ is called a Pythagorean representation (in short P-representation) of $X$. Surprisingly, \textit{all} representations of $\cO$ are Pythagorean (if allowing infinite-dimensional P-modules), that is, $\Pi^\cO$ is essentially surjective on objects, see \cite[Proposition 7.1]{Brothier-Jones19}.

Even though all representations of $\cO$ are Pythagorean we may refer to a representation $\pi$ of $\cO$ as being a P-representation to emphasis that we view $\pi$ as $\Pi^X(m)$ for a certain P-module $m$.
Lastly, for convenience we may write $\Pi^X(\fH)$ in place for $\Pi^X(m)$. 

To conclude, below is an important result which motivates the terminology of \textit{complete} P-submodules and \textit{residual} subspaces, and allows to only consider \textit{full} P-modules.

\begin{proposition} \cite[Proposition 2.4]{Brothier-Wijesena24}
	Let $\fH = \fH_{\comp} \oplus \fH_{res}$ for a finite-dimensional P-module ($A,B,\fH)$. Then $\Pi^X(\fH) \cong \Pi^X(\fH_{\comp})$ for $X = F,T,V,\cO$.
\end{proposition}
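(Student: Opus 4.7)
The inclusion $\iota:\fH_{\comp}\hookrightarrow\fH$ is a morphism in $\Mod(\cP)$, so functoriality of $\Pi^X$ produces an isometric intertwiner $\Pi^X(\iota):\Pi^X(\fH_{\comp})\to\Pi^X(\fH)$; the task reduces to showing it has dense range. I would unfold the inductive-limit description of $\Pi^X(\fH)$: the ambient Hilbert space $\scrH$ is the completion of $\varinjlim_t\fH^t$, where $t$ ranges over finite rooted binary trees, $\fH^t:=\bigoplus_{\text{leaves}(t)}\fH$, and the connecting maps send $\xi\otimes e_\ell\mapsto (A\xi)\otimes e_{\ell 0}+(B\xi)\otimes e_{\ell 1}$. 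It suffices to approximate every root-level $\xi\in\fH$ by vectors in the image of $\Pi^X(\iota)$. Writing $\xi=\xi_c+\xi_r$ with $\xi_c\in\fH_{\comp}$ and $\xi_r\in\fH_{\res}$, only $\xi_r$ requires work.

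Since $\fH_{\comp}$ is $A,B$-invariant, the operators take the upper block-triangular form $A=\left(\begin{smallmatrix}A_{11}&A_{12}\\0&A_{22}\end{smallmatrix}\right)$ and $B=\left(\begin{smallmatrix}B_{11}&B_{12}\\0&B_{22}\end{smallmatrix}\right)$, and for any word $w$ the $(2,2)$-block $w_{22}$ is the corresponding product of $A_{22}$'s and $B_{22}$'s. Refining the root to the complete binary tree of depth $n$ converts $\xi_r$ into $\sum_{|w|=n}(w\xi_r)\otimes e_w$, whose $\bigoplus_w\fH_{\comp}$-summand already lies in the image of $\Pi^X(\iota)$ while the complementary $\bigoplus_w\fH_{\res}$-summand has squared norm $\langle Q_n\xi_r,\xi_r\rangle$, where $Q_n:=\sum_{|w|=n}w_{22}^{*}w_{22}\in B(\fH_{\res})$. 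The proposition thus reduces to the key claim $Q_n\to 0$.

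To establish this I would first verify the routine facts: $0\leq Q_n\leq\id_{\fH_{\res}}$, the recursion $Q_{n+1}=A_{22}^{*}Q_nA_{22}+B_{22}^{*}Q_nB_{22}$, and the monotonicity $\langle Q_{n+1}\eta,\eta\rangle\leq\langle Q_n\eta,\eta\rangle$---the last point exploiting the off-diagonal identity $A_{11}^{*}A_{12}+B_{11}^{*}B_{12}=0$, which is the $(1,2)$-block of the Pythagorean relation $A^*A+B^*B=\id$. Finite-dimensionality then yields a norm limit $Q_\infty\geq 0$ inheriting the recursion. The main obstacle is to rule out $Q_\infty\neq 0$: I would inspect the top eigenspace $E_\lambda$ with $\lambda:=\|Q_\infty\|$ and, for $\eta\in E_\lambda$, chase the equality case of
\[\lambda\|\eta\|^2=\langle Q_\infty A_{22}\eta,A_{22}\eta\rangle+\langle Q_\infty B_{22}\eta,B_{22}\eta\rangle\leq\lambda\bigl(\|A_{22}\eta\|^2+\|B_{22}\eta\|^2\bigr)\leq\lambda\|\eta\|^2.\]
Saturation throughout forces $A_{12}\eta=B_{12}\eta=0$ and $A_{22}\eta,B_{22}\eta\in E_\lambda$, exhibiting $E_\lambda$ as an $A,B$-invariant subspace of $\fH$ contained in $\fH_{\res}$. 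By the very definition of the residual subspace this forces $E_\lambda=\{0\}$; hence $\lambda=0$, $Q_\infty=0$, and $\Pi^X(\iota)$ is a unitary equivalence.
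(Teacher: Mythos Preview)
The paper does not give a proof of this proposition: it is simply quoted as \cite[Proposition~2.4]{Brothier-Wijesena24} and used as input. So there is nothing in the present paper to compare your argument against.

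Your proposal itself is sound and is a natural direct proof. One small correction: the monotonicity $Q_{n+1}\leq Q_n$ does not hinge on the off-diagonal identity $A_{11}^{*}A_{12}+B_{11}^{*}B_{12}=0$. Rather, the $(2,2)$-block of the Pythagorean relation yields $Q_1=\id_{\fH_{\res}}-A_{12}^{*}A_{12}-B_{12}^{*}B_{12}\leq\id=Q_0$, and then the recursion $Q_{n+1}=A_{22}^{*}Q_nA_{22}+B_{22}^{*}Q_nB_{22}$ propagates $Q_n\leq Q_{n-1}$ to $Q_{n+1}\leq Q_n$ by induction. The same $(2,2)$-block identity is also what you actually use in the equality analysis to conclude $A_{12}\eta=B_{12}\eta=0$. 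The $(1,2)$-block identity is true but plays no role. Apart from this misattribution, the eigenspace argument for $Q_\infty=0$ and the density conclusion are correct.
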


Subsequently, we define the \textit{P-dimension} $\dim_P(\sigma)$ of a P-representation $\sigma \cong \Pi^X(\fH)$ to be $\dim_P(\fH) = \dim(\fH_{\comp})$. It is one of the main results of \cite{Brothier-Wijesena23} that this is well-defined and thus is an invariant for the representation $\sigma$.

\subsubsection{Diffuse and atomic P-representations} \label{subsec:diff-atom-rep}
Given $\sigma^X := \Pi^X(\fH)$ we have $$\sigma^X \cong \sigma^X_{\diff} \oplus \sigma^X_{\atom}$$
where $\sigma^X_{\diff} := \Pi^X(\fH_{\diff})$ and $\sigma^X_{\atom} := \Pi^X(\fH_{\atom})$. 
We refer to $\sigma^X_{\diff}$ (resp.~$\sigma^X_{\atom}$) as the diffuse (resp.~atomic) part of $\sigma^X$.

\subsection{Tensor categories} \label{subsec:intro-tensor-cat}

We recall some definitions from category theory. 
We follow the terminology of the book \cite{Etingof-Gelaki-Nikshych-Ostrik16} that we refer for details.

\begin{definition}
	Let $\cC$ be a symmetric monoidal category with the tensor product bi-functor $\boxtimes:\cC\times\cC\to\cC$ and unit object $(\bone,\iota).$
	It is
	\begin{itemize}
	\item a \emph{dagger category} if for all objects $X,Y\in \ob(\cC)$ there exists an anti-linear map $\dagger : \Hom_{\cC}(X,Y) \rightarrow \Hom_{\cC}(Y,X)$ satisfying $f^{\dagger\dagger} = f$, $(f \circ g)^\dagger = g^\dagger \circ f^\dagger$, and $\id_Z^\dagger = \id_Z$ for all $Z\in \ob(\cC)$;
	\item \emph{unitary} (also referred to as $C^*$-tensor category) if it is a dagger category and the endomorphism spaces are $C^*$-algebras.
	\item \emph{rigid} if every object $X$ has a dual $X^*$: there exist an evaluation map $\textrm{ev}_X : X^* \boxtimes X \rightarrow \bone$ and a co-evaluation map $\textrm{coev}_X : \bone \rightarrow X \boxtimes X^*$ satisfying		
	\begin{align*}
		(\id_X \boxtimes \textrm{ev}_X) \circ a_{X,X^*,X} \circ (\textrm{coev}_X \boxtimes \id_X) &= \id_X \\
		(\textrm{ev}_X \boxtimes \id_{X^*}) \circ a^{-1}_{X^*,X,X^*} \circ (\id_{X^*} \boxtimes \textrm{coev}_{X}) &= \id_{X^*}
	\end{align*}
	where $a_{Z_1, Z_2, Z_3} : (Z_1 \boxtimes Z_2) \boxtimes Z_3 \rightarrow Z_1 \boxtimes (Z_2 \boxtimes Z_3)$ is the associativity constraint.		
	\end{itemize}
	
	A \emph{tensor category} $(\cC, \boxtimes)$ over the complex numbers $\C$ is a $\C$-linear (i.e.~morphism spaces are $\C$-vector spaces and compositions are bi-linear) rigid symmetric monoidal category such that $\End_{\cC}(\bone) \cong \C$. It is \emph{pivotal} if the mapping $X \mapsto X^{**}$ induces a monoidal natural isomorphism.
\end{definition}

Pivotal tensor categories admit \emph{traces} and \emph{quantum dimensions}.

\begin{definition}
	Let $\cC$ be a pivotal tensor category. Then for an object $X$ of $\cC$ the \textit{categorical trace} is defined as:
	\[\textrm{tr} : \End_{\cC}(X) \rightarrow \End_{\cC}(\bone),\ f \mapsto \textrm{ev}_X \circ (\id_{X^*} \circ f) \circ \gamma_X \circ \textrm{coev}_X\]
	where $\End_{\cC}(\bone)$ is identified with $\C$ and $\gamma_X$ is the isomorphism between $X \boxtimes X^*$ and $X^* \boxtimes X$. 
	The \textit{quantum dimension} of $X$ is then defined as $\textrm{tr}(\id_X)$.
\end{definition}

Here is one example that we will later use. 
\begin{example}\label{ex:Hilb}
	Let $\Hilb$ be the category of complex Hilbert spaces with bounded linear maps for morphisms.
	Equip $\Hilb$ with the usual monoidal product $\ot$ which confers to $\Hilb$ a structure of a unitary symmetric $\C$-linear monoidal category with the dagger structure being the usual adjoint of operators and the unit object is $\C$.
	The associativity constraints of $(\Hilb,\ot)$ are the unique continuous linear maps
	$$\alpha_{\fH,\fK,\fL}: (\fH\ot \fK)\ot \fL\to \fH \ot (\fK\ot \fL)$$
	defined on elementary tensors by
	$$(\xi\ot \eta)\ot \zeta\mapsto \xi\ot (\eta\ot \zeta).$$
\end{example}

\section{A Tensor Product of Pythagorean Representations}\label{sec:tensor-product}
We now define a binary operation $\boxtimes$ on certain pairs of P-modules. 
We will then restrict it to certain sub-classes of P-modules that are closed under $\boxtimes$ and on which the P-functors are fully faithful.
From there we will transport $\boxtimes$ from P-modules to P-representations.
Recall all P-modules are assumed to be finite-dimensional.

\begin{notation} \label{not:polar-decomp}
	Let $T$ be a bounded linear operator on a \emph{finite-dimensional} Hilbert space $\fH$. Write $T = U_T\vert T \vert$ to be \emph{any} polar decomposition where $U_T$ is unitary and $\vert T \vert := (T^*T)^{1/2}$.
\end{notation}

Note, the choice of $U_T$ is unique if and only if $T$ is invertible. For our purpose, the choice of $U_T$ does not matter and we will let $U_T$ be any suitable unitary operator.

\subsection{A binary operation on P-modules.}
Consider two P-modules $(A,B,\fH)$, $(\ti A, \ti B, \ti \fH)$. 
A na\"ive choice for defining a tensor product would be to take:
\[(A\otimes \ti A, B \otimes \ti B, \fH \otimes \ti \fH).\]
This attempt fails: the pair of operators does not satisfy the Pythagorean equality in general. Nevertheless, this attempt can be partially salvaged as below.

\begin{definition} \label{def:tensor-product}
Let $(A,B,\fH)$, $(\ti A, \ti B, \ti \fH)$ be any two P-modules satisfying
\begin{equation}\label{eq:condition-P}\ker(A \otimes \ti A) \cap \ker(B \otimes \ti B) = \{0\}.\end{equation}	
Then define a binary operation $\boxtimes$ by:
	\[(A,B,\fH) \boxtimes (\ti A, \ti B, \ti \fH) = ((A \otimes \ti A)K^{-1}, (B \otimes \ti B)K^{-1}, \fH \otimes \ti \fH)\]
	where 
	\[K := K(A,\ti A) = \sqrt{\vert A \vert^2 \otimes \vert \ti A \vert^2 + \vert B \vert^2 \otimes \vert \ti B \vert^2}.\]
	The above resulting triple is a P-module and we call $\boxtimes$ the \textit{tensor product} of P-modules. 
	
	Note, $K$ purely depends on $A$ and $\ti A$ since $\vert B\vert^2 = \id_{\fH} - \vert A \vert^2$ and $\vert \ti B \vert^2 = \id_{\ti \fH} - \vert \ti A \vert^2$.
\end{definition}

\textbf{Terminology and notations.}
We use the term ``tensor product'' and the symbol $\boxtimes$ to refer to our novel binary operation while ``usual tensor product" and $\ot$ refer to the classical one appearing in Example \ref{ex:Hilb}.

\begin{remark} \label{rem:tensor-product-def}
\begin{enumerate}
\item The operator $K$ of above is indeed invertible.
Since $\ker(T)=\ker(T^*T)$ and $|A\ot \ti A|=|A|\ot |\ti A|$ we deduce that the assumption \eqref{eq:condition-P} is equivalent to $\ker(K)=\{0\}$. Since $\dim(\fH\ot\ti\fH)<\infty$ this is equivalent to $K$ being invertible. 
\item The Pythagorean equality is generically satisfied only when $K^{-1}$ is placed to the right of $A\ot \ti A$ and $B\ot \ti B$.
This limits our choices for defining $\boxtimes$.
\item 
The above provides a general process to construct a P-module from any pair of operators $X,Y \in B(\fH)$ satisfying $\ker(X) \cap \ker(Y) = \{0\}$. Indeed,
\[(XK^{-1}, YK^{-1}, \fH)\]
forms a P-module where $K = \sqrt{\vert X \vert^2 + \vert Y \vert^2}$.
\item Observe that
\begin{align*}
(A\otimes \ti A)K^{-1} &= (U_A \otimes U_{\ti A})(\vert A \vert \otimes \vert \ti A \vert)K^{-1} \\
&= U_A \otimes U_{\ti A}\ \frac{\vert A \vert \otimes \vert \ti A \vert}{\sqrt{\vert A \vert^2 \otimes \vert \ti A \vert^2 + \vert B \vert^2 \otimes \vert \ti B \vert^2}}\\
&= U_A \otimes U_{\ti A}\ \sqrt{\frac{\vert A \vert^2 \otimes \vert \ti A \vert^2}{\vert A \vert^2 \otimes \vert \ti A \vert^2 + \vert B \vert^2 \otimes \vert \ti B \vert^2}}
\end{align*}
where the above fraction notation is justified because the two terms commute (hence there is no ambiguity in the order of composition).
This gives a practical polar decomposition for $(A \otimes \ti A)K^{-1}$. 
\end{enumerate}	
\end{remark}

Motivated by the above remark we introduce the following binary operation.
For an operator $P\in B(\fH)$ we write $0\leq P\leq \id$ when $0\leq \langle P\xi,\xi\rangle\leq \langle \xi,\xi\rangle$ for all $\xi\in\fH$, i.e.~$P$ is positive and dominated by $\id$.

\begin{definition} \label{def:star-operation}
Given Hilbert spaces $\fH,\ti\fH$ define 
$$C(\fH,\ti\fH):=\{(P,Q)\in B(\fH)\times B(\ti \fH):\ 0\leq P, Q \leq \id \text{ satisfying } \eqref{eq:condition-PQ} \}$$
where
	\begin{equation}\label{eq:condition-PQ}\ker(P \otimes Q) \cap \ker((\id - P) \otimes (\id - Q)) = \{0\}.\end{equation}
Then define 
\begin{align*}
& C(\fH,\ti\fH)\to B(\fH\ot\ti\fH)\\
& (P,Q)\mapsto P\star Q:=\frac{P \otimes Q}{\sqrt{P^2 \otimes Q^2 + (\id - P^2) \otimes (\id - Q^2)}}.
\end{align*}
Note that $0\leq P\star Q\leq \id$. 
In the scalar case $P=p,Q=q\in [0,1]$ and
$$p\star q=\frac{pq}{\sqrt{p^2q^2 + (1-p^2)(1-q^2)}}$$
which is defined on the unit square $[0,1]^2$ minus the two points $(0,1), (1,0)$.
\end{definition}

\begin{remark} \label{rem:star-operations}
Note, for two positive operators $U\in B(\fH), V \in B(\ti\fH)$ dominated by the identity, we have that $W:=U^2 \otimes V^2 + (\id - U^2) \otimes (\id - V^2)$ is invertible if and only if $(U,V) \in C(\fH, \ti\fH)$ (this can be verified by computing the spectrum of $W$).
	\end{remark}

Recall that all positive operators acting on a finite-dimensional Hilbert space admits an orthonormal basis of eigenvectors. Moreover, we have the following fact.

\begin{lemma} \label{lem:star-diag}
	Let $(P,Q)$ be a pair of positive operators satisfying \eqref{eq:condition-PQ}.
	Let $\{\xi_i\}_i$ and $\{\eta_j\}_j$ be some orthonormal basis of eigenvectors of $P$, $Q$, respectively, with associated eigenvalues $\{\lambda_i\}_i$ and $\{\mu_j\}_j$, respectively. Then $P \star Q$ admits an orthonormal basis of eigenvectors which is $\{\xi_i \otimes \eta_j\}_{i,j}$ with associated eigenvalues $\{\lambda_i \star \mu_j\}_{i,j}$.
\end{lemma}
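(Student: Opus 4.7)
The plan is to prove this by direct eigenvalue computation, leveraging the fact that $P \star Q$ is built out of functional calculus applied to commuting positive operators which are diagonalized by the product basis.

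First, I would observe that $\{\xi_i \otimes \eta_j\}_{i,j}$ is an orthonormal basis of $\fH \otimes \ti\fH$ by standard properties of tensor products of Hilbert spaces. Next, I would show that each elementary tensor $\xi_i \otimes \eta_j$ is a simultaneous eigenvector of the two commuting positive operators
\[
N := P \otimes Q \quad \text{and} \quad W := P^2 \otimes Q^2 + (\id - P^2) \otimes (\id - Q^2),
\]
with respective eigenvalues $\lambda_i \mu_j$ and $\lambda_i^2 \mu_j^2 + (1-\lambda_i^2)(1-\mu_j^2)$. This is a one-line check using that $P\xi_i = \lambda_i \xi_i$ and $Q\eta_j = \mu_j \eta_j$ imply $P^2 \otimes Q^2 (\xi_i \otimes \eta_j) = \lambda_i^2 \mu_j^2\, \xi_i \otimes \eta_j$, etc.

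The key step is to confirm that $W$ is invertible on each eigenline $\C \cdot (\xi_i \otimes \eta_j)$, so that $W^{-1/2}$ is well-defined via functional calculus and again admits $\xi_i \otimes \eta_j$ as an eigenvector with eigenvalue $(\lambda_i^2 \mu_j^2 + (1-\lambda_i^2)(1-\mu_j^2))^{-1/2}$. By Remark \ref{rem:star-operations} (or directly: the hypothesis $(P,Q) \in C(\fH,\ti\fH)$ says $\ker(P\otimes Q)\cap \ker((\id-P)\otimes(\id-Q))=\{0\}$), the operator $W$ is invertible on the whole space, so in particular its eigenvalue on any $\xi_i\otimes\eta_j$ is strictly positive; concretely this eliminates the forbidden cases $(\lambda_i,\mu_j)\in\{(0,1),(1,0)\}$ which are precisely where the scalar function $p\star q$ fails to be defined.

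Finally, since $N$ and $W^{-1/2}$ commute and share the eigenbasis $\{\xi_i\otimes\eta_j\}$, their product $P\star Q = N \cdot W^{-1/2}$ acts on $\xi_i\otimes\eta_j$ as the scalar
\[
\frac{\lambda_i \mu_j}{\sqrt{\lambda_i^2 \mu_j^2 + (1-\lambda_i^2)(1-\mu_j^2)}} = \lambda_i \star \mu_j,
\]
concluding the proof. I do not anticipate any real obstacle: the argument is essentially a routine application of the spectral theorem in finite dimensions, with the only subtle point being to invoke the hypothesis \eqref{eq:condition-PQ} at the right moment to guarantee $W$ is invertible (equivalently, that no eigenvalue of $W$ vanishes).
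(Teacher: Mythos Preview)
Your proof is correct and is exactly the natural argument; the paper itself states this lemma without proof, treating it as an immediate consequence of the spectral theorem in finite dimensions. Your write-up supplies precisely the routine details the authors omitted, including the one non-trivial point of invoking hypothesis \eqref{eq:condition-PQ} (via Remark \ref{rem:star-operations}) to ensure $W$ is invertible so that $W^{-1/2}$ makes sense.
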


We now show that $\star$ is associative up to the associativity constraints of $(\Hilb,\ot)$.

\begin{lemma} \label{lem:star-associative}
Consider $P,Q,R$ assuming that $P\star Q$ and $(P\star Q)\star R$ are well-defined.
Then $Q\star R$ and $P\star (Q\star R)$ are well-defined, the associativity constraints of $(\Hilb,\ot)$ conjugates $(P\star Q)\star R$ with $P\star (Q\star R)$, and we have 
$$(P\star Q)\star R = (P\ot Q)\ot R \cdot ( (P^2\ot Q^2)\ot R^2 + ((\id-P^2)\ot (\id-Q^2))\ot (\id-R^2))^{-1/2}.$$
\end{lemma}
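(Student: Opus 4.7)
The plan is to diagonalise everything simultaneously using Lemma \ref{lem:star-diag} and reduce the operator identity to a scalar computation. First I would pick orthonormal eigenbases $\{\xi_i\}_i,\{\eta_j\}_j,\{\zeta_k\}_k$ of $P,Q,R$ with eigenvalues $\lambda_i,\mu_j,\nu_k\in[0,1]$. Applying Lemma \ref{lem:star-diag} to $(P,Q)$ shows that $P\star Q$ is diagonal in $\{\xi_i\ot\eta_j\}_{i,j}$ with eigenvalues $\lambda_i\star\mu_j$. Applying it a second time to $(P\star Q,R)$ (using the hypothesis that this pair lies in $C$) shows that $(P\star Q)\star R$ is diagonal in the basis $\{(\xi_i\ot\eta_j)\ot\zeta_k\}_{i,j,k}$ with eigenvalues $(\lambda_i\star\mu_j)\star\nu_k$.

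Next I would carry out the scalar calculation by direct substitution of $s:=p\star q=pq/\sqrt{p^2q^2+(1-p^2)(1-q^2)}$ into the formula for $s\star r$. The denominators telescope and one obtains the symmetric expression
\[
(p\star q)\star r=\frac{pqr}{\sqrt{p^2q^2r^2+(1-p^2)(1-q^2)(1-r^2)}}=: g(p,q,r).
\]
Because $g$ is symmetric in $(p,q,r)$ it also equals $p\star(q\star r)$ whenever the latter is defined. Moreover, chasing the two ``forbidden'' pairs $(0,1),(1,0)$ through Definition \ref{def:star-operation} shows that well-definedness of $(p\star q)\star r$ is equivalent to strict positivity of the radicand $f(p,q,r):=p^2q^2r^2+(1-p^2)(1-q^2)(1-r^2)$, which is again symmetric.

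Symmetry of $f$ then resolves the well-definedness claims. Applied to the eigenvalue triples $(\lambda_i,\mu_j,\nu_k)$, it yields $f(\mu_j,\nu_k,\lambda_i)>0$ for all $(i,j,k)$; restricting to each pair $(j,k)$ and to $(i,(j,k))$ and invoking Remark \ref{rem:star-operations} shows that $(Q,R)\in C(\cdot,\cdot)$ and that $(P,Q\star R)\in C(\cdot,\cdot)$, so $Q\star R$ and $P\star(Q\star R)$ are well-defined. For the explicit operator formula, observe that the RHS of the displayed equation is diagonal in $\{(\xi_i\ot\eta_j)\ot\zeta_k\}$ with eigenvalue $\lambda_i\mu_j\nu_k/\sqrt{f(\lambda_i,\mu_j,\nu_k)}=g(\lambda_i,\mu_j,\nu_k)=(\lambda_i\star\mu_j)\star\nu_k$, matching the eigenvalues of $(P\star Q)\star R$ established in the first paragraph.

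Finally, for the conjugation by the associativity constraint: $\alpha_{\fH,\fK,\fL}$ from Example \ref{ex:Hilb} sends $(\xi_i\ot\eta_j)\ot\zeta_k$ to $\xi_i\ot(\eta_j\ot\zeta_k)$, so it conjugates $(P\star Q)\star R$ into the operator diagonal in $\{\xi_i\ot(\eta_j\ot\zeta_k)\}$ with eigenvalues $(\lambda_i\star\mu_j)\star\nu_k$. A second application of Lemma \ref{lem:star-diag} shows $P\star(Q\star R)$ is diagonal in the same basis with eigenvalues $\lambda_i\star(\mu_j\star\nu_k)$, and the scalar identity $g=g$ equates the two. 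I do not foresee a serious obstacle: once the reduction to the joint eigenbasis is set up, the only subtlety is the bookkeeping required to check that the three \emph{a priori} distinct well-definedness hypotheses all reduce to the single symmetric condition $f(\lambda_i,\mu_j,\nu_k)>0$, and that the associativity constraint of $(\Hilb,\ot)$ — which should not be confused with the associativity of $\star$ — acts as a mere relabelling of basis vectors.
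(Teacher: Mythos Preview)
Your proof is correct and follows a genuinely different route from the paper. The paper works directly at the operator level: it introduces the shorthand $P_0:=\sqrt{\id-P^2}$, establishes the key identity $(P\star Q)_0^2=(P_0\star Q_0)^2$, and then substitutes the definition of $\star$ into $(P\star Q)\star R$ to watch the intermediate $K(P,Q)^{-1}$ factors cancel from numerator and denominator, arriving at the symmetric closed form without ever choosing a basis. You instead invoke Lemma~\ref{lem:star-diag} twice to diagonalise everything in the tensor-product eigenbasis and reduce the whole statement to the scalar identity $(p\star q)\star r=pqr/\sqrt{f(p,q,r)}$ with $f$ symmetric. Your route is arguably more transparent---the symmetry of $f$ simultaneously delivers associativity and the well-definedness of $Q\star R$ and $P\star(Q\star R)$---but it leans on the finite-dimensional spectral theorem through Lemma~\ref{lem:star-diag}, whereas the paper's coordinate-free manipulation would survive in settings where a joint eigenbasis is not available. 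One small point worth tightening: the phrase ``restricting to each pair $(j,k)$'' is doing real work and deserves a sentence---what you actually need is that $f(\lambda_i,\mu_j,\nu_k)>0$ for \emph{some} $\lambda_i$ already forces $(\mu_j,\nu_k)\notin\{(0,1),(1,0)\}$, since $f(p,0,1)=f(p,1,0)=0$ identically in $p$.
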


\begin{proof}
Let $P,Q,R$ be as above.
Write $P_0$ for $\sqrt{\id-P^2}$ and recall $K(P,Q)$ from Definition \ref{def:tensor-product}. 
Observe that $K(P_0,Q_0)=K(P,Q).$
Moreover,
\begin{align}\label{eq:star}
(P\star Q)_0^2  & =  \id - (P\star Q)^2 = \id - \left(\frac{P\ot Q}{\sqrt{ P^2\otimes Q^2 + P_0^2\otimes Q_0^2}}\right)^2
 = \frac{P_0^2\ot Q_0^2}{P^2\otimes Q^2 + P_0^2\otimes Q_0^2}\\
& = (P_0\star Q_0)^2.\nonumber
\end{align}
This implies that $(P\star Q)\star R$ is equal to:
\begin{align*}
&= \frac{(P \star Q) \otimes R}{\sqrt{(P \star Q)^2 \otimes R^2 + (P\star Q)_0^2 \otimes R_0^2}} \\
& = \frac{(P \star Q) \otimes R}{\sqrt{(P \star Q)^2 \otimes R^2 + (P_0\star Q_0)^2 \otimes R_0^2}}  \text{ by Equation \ref{eq:star} }\\
& = \frac{(P \ot Q) \otimes R \circ (K(P,Q)^{-1}\ot \id)}{\sqrt{((P^2 \ot Q^2) \otimes R^2)\circ (K(P,Q)^{-2}\ot \id) + ((P_0^2\ot Q_0^2) \otimes R_0^2)  \circ (K(P_0,Q_0)^{-2}\ot \id)}} \\
& = \frac{(P \ot Q) \otimes R}{\sqrt{(P^2 \ot Q^2) \otimes R^2 + (P_0^2\ot Q_0^2)\ot R_0^2 }}. 
\end{align*}
Conjugating this operator by the associativity constraint of $(\Hilb,\ot)$ (see Remark \ref{rem:star-operations}) shows that $P\star (Q\star R)$ is well-defined and equal to the above formula, up to moving the parenthesis.
\end{proof}

The $\star$ operation of above immediately provides a convenient alternate formulation for $\boxtimes$. 

\begin{proposition} \label{prop:alt-tensor-product-def}
	Let $(A,B,\fH)$, $(\ti A, \ti B, \ti \fH)$ be two P-modules satisfying \eqref{eq:condition-P}. Then:
	\[(A,B,\fH) \boxtimes (\ti A, \ti B, \ti \fH) = ((U_A \otimes U_{\ti A}) \circ (\vert A \vert \star \vert \ti A \vert), (U_B \otimes U_{\ti B}) \circ (\vert B \vert \star \vert \ti B \vert), \fH \otimes \ti \fH).\]
\end{proposition}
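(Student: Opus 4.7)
The proof should be a direct computation, converting the formula of Definition \ref{def:tensor-product} into the one featuring the $\star$-operation by pulling out the polar parts. The plan is as follows.

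First I would apply the polar decompositions $A=U_A|A|$ and $\ti A = U_{\ti A}|\ti A|$ (Notation \ref{not:polar-decomp}) to the first leg to get
\[
(A\otimes \ti A)K^{-1} = (U_A\otimes U_{\ti A})\bigl(|A|\otimes |\ti A|\bigr)K^{-1}.
\]
Next, I would observe that $|A|\otimes |\ti A|$ commutes with $K$: indeed, $K^2 = |A|^2\otimes|\ti A|^2 + |B|^2\otimes|\ti B|^2$ and $|A|\otimes|\ti A|$ obviously commutes with the first summand, while the Pythagorean equalities $|B|^2 = \id_\fH - |A|^2$ and $|\ti B|^2 = \id_{\ti \fH} - |\ti A|^2$ show it commutes with the second. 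Hence $|A|\otimes|\ti A|$ commutes with $K^{-1}$ (whose existence follows from assumption \eqref{eq:condition-P}, as already noted in Remark \ref{rem:tensor-product-def}).

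Then, using these same Pythagorean substitutions, the operator $K$ can be rewritten as
\[
K = \sqrt{|A|^2\otimes |\ti A|^2 + (\id_\fH - |A|^2)\otimes (\id_{\ti\fH} - |\ti A|^2)},
\]
so that by Definition \ref{def:star-operation}
\[
(|A|\otimes |\ti A|)K^{-1} = |A|\star |\ti A|,
\]
which yields the claimed formula for the first leg. The same computation applied to $(B,\ti B)$ gives the second leg; here one must verify that $(|B|, |\ti B|)\in C(\fH,\ti\fH)$ so that $|B|\star|\ti B|$ is meaningful, but this is immediate because the analogous denominator operator is again $K$, so invertibility of $K$ (equivalent to \eqref{eq:condition-P}) already supplies the needed condition via Remark \ref{rem:star-operations}.

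The only mild subtlety, and the main thing to check carefully, is the commutation of $|A|\otimes|\ti A|$ with $K^{-1}$ and the symmetric well-definedness of $|B|\star|\ti B|$; both rely on the Pythagorean identity to swap between $|A|$ and $|B|$ (and likewise for the tilded operators) inside $K$. Once these are in hand the identification of the two formulas is essentially formal.
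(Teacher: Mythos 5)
Your proposal is correct and follows essentially the same route as the paper, which records exactly this computation (polar decomposition, commutation of $\vert A\vert\otimes\vert\ti A\vert$ with $K^{-1}$ via the Pythagorean identity, and identification with $\star$) in item (4) of Remark \ref{rem:tensor-product-def} and then states the proposition as an immediate consequence. Your additional check that $(\vert B\vert,\vert\ti B\vert)\in C(\fH,\ti\fH)$ via Remark \ref{rem:star-operations} is a welcome precision that the paper leaves implicit.
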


\begin{remark} \label{rem:alt-tensor-product-def}
The above formulation does not depend on the choices of polar decompositions of $A$ and $B$. Moreover, this formulation along with Lemma \ref{lem:star-diag} demonstrates that $\boxtimes$ is nothing more than the usual tensor product of the individual operators of the P-modules, but now the positive part $\vert A\vert \otimes \vert\ti A \vert$ (resp. $\vert B\vert \otimes \vert\ti B\vert$) has been replaced by $\vert A \vert \star \vert \ti A \vert$ (resp.~$\vert B \vert \star \vert \ti B\vert$) which can be considered as a ``weighted quadratic mean'' of the individual positive parts to ensure that the Pythagorean equality is satisfied.
\end{remark}

Below we enunciate some useful properties of the tensor product $\boxtimes$. 

\begin{lemma} \label{lem:tensor-prod-prop}
	Let $m, \ti m$ be two P-modules. The operation $\boxtimes$ satisfies the following properties (whenever $\boxtimes$ is defined):
	\begin{enumerate}
		\item $\boxtimes$ is associative up to conjugating by the associativity constraints of $(\Hilb,\ot)$;
		\item $\boxtimes$ is distributive with respect to the usual direct sum; 
		\item $\boxtimes$ is well-defined on equivalence classes of P-modules, i.e.~if $m\cong m_0, \ti m\cong\ti m_0$, then $m\boxtimes \ti m\cong m_0\boxtimes \ti m_0$;	
		\item $m\boxtimes\ti m \cong \ti m\boxtimes m$;
		\item if $\bone := (1/\sqrt{2},1/\sqrt{2},\C)$, then $m\boxtimes \bone\cong \bone\boxtimes m\cong m$. 	
	\end{enumerate}
\end{lemma}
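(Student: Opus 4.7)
The plan is to verify each of the five claims in sequence, relying heavily on the alternative formulation from Proposition \ref{prop:alt-tensor-product-def}, which exhibits $m \boxtimes \ti m$ as the composition of a ``unitary'' part $U_A \otimes U_{\ti A}$ (and its counterpart for $B$) with a ``positive'' part $|A| \star |\ti A|$. This split is convenient because the unitary part behaves trivially under all operations and the positive part is controlled by Lemmas \ref{lem:star-diag} and \ref{lem:star-associative}.

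For (1), starting from the alternative formulation and noting that $U_A \otimes U_{\ti A} \otimes U_{\ti{\ti A}}$ is strictly associative on the nose, the entire associativity question reduces to the corresponding question for $\star$, which was settled in Lemma \ref{lem:star-associative}. In particular, the associators of $(\Hilb, \otimes)$ already suffice as coherence data, with no extra conjugations needed. For (4), the flip unitary $\sigma : \fH \otimes \ti\fH \to \ti\fH \otimes \fH$, $\xi \otimes \eta \mapsto \eta \otimes \xi$, conjugates $A \otimes \ti A$ into $\ti A \otimes A$ and similarly for $B$; the fact that $|A| \star |\ti A|$ is symmetric in its arguments is immediate from Definition \ref{def:star-operation}, and therefore $\sigma$ conjugates the operators of $m \boxtimes \ti m$ into those of $\ti m \boxtimes m$, giving a unitary equivalence.

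For (3), suppose $U : \fH \to \fH_0$ and $\ti U : \ti\fH \to \ti\fH_0$ are unitaries implementing the equivalences $m \cong m_0$ and $\ti m \cong \ti m_0$. Then $U A U^* = A_0$ yields $U |A|^2 U^* = |A_0|^2$, and similarly for the $B$'s; consequently $(U \otimes \ti U) K(A, \ti A)^2 (U^* \otimes \ti U^*) = K(A_0, \ti A_0)^2$, and by functional calculus the same equality holds for $K^{-1}$. Combining, $U \otimes \ti U$ intertwines $m \boxtimes \ti m$ with $m_0 \boxtimes \ti m_0$, as desired. For (5), a direct computation gives
\[ K(A, 1/\sqrt{2})^2 = \tfrac{1}{2}|A|^2 \otimes \id + \tfrac{1}{2}|B|^2 \otimes \id = \tfrac{1}{2}\id, \]
so $K^{-1} = \sqrt{2}\,\id$ and $(A \otimes 1/\sqrt{2}) K^{-1} = A \otimes 1$; under the canonical isomorphism $\fH \otimes \C \cong \fH$ this is exactly $A$, and similarly for $B$, yielding $m \boxtimes \bone \cong m$ (and then $\bone \boxtimes m \cong m$ by (4)). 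For (2), if $m = m_1 \oplus m_2$ with operators $A_1 \oplus A_2$ and $B_1 \oplus B_2$, then $|A|^2 = |A_1|^2 \oplus |A_2|^2$ is block-diagonal, hence $K(A_1 \oplus A_2, \ti A)$ decomposes as $K(A_1, \ti A) \oplus K(A_2, \ti A)$ after identifying $(\fH_1 \oplus \fH_2) \otimes \ti\fH \cong (\fH_1 \otimes \ti\fH) \oplus (\fH_2 \otimes \ti\fH)$, and distributivity follows.

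The main subtle point is (1): one must verify that no auxiliary coherence data beyond the standard associator of $(\Hilb, \otimes)$ is required. However, this has essentially already been done in Lemma \ref{lem:star-associative}, where the associativity of $\star$ was shown to be implemented precisely by the associator of $\otimes$; so the argument for (1) amounts to combining that lemma with the obvious associativity of tensoring unitaries. All other items are direct verifications once the alternative formulation of Proposition \ref{prop:alt-tensor-product-def} is in hand.
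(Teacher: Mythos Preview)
Your proof is correct and follows essentially the same approach as the paper: items (1), (3), (4), (5) are argued identically (via Lemma \ref{lem:star-associative} and Proposition \ref{prop:alt-tensor-product-def}, conjugation by $U\otimes\ti U$ and functional calculus, the flip unitary, and a direct computation of $K$, respectively). For (2) the paper routes through Proposition \ref{prop:alt-tensor-product-def} and reduces to distributivity of $\star$, whereas you work directly with the block-diagonal decomposition of $K$; these are the same computation in slightly different packaging.
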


\begin{proof}	
	\textit{Proof of (1).} This follows from Lemma \ref{lem:star-associative} and Proposition \ref{prop:alt-tensor-product-def}.
	
	\textit{Proof of (2).} It is a standard fact that the polar decomposition is compatible with the direct sum in the sense that we can choose $U_{T \oplus S}$ to be equal to $U_T \oplus U_S$. Since the usual tensor product is distributive with respect to the direct sum, again by Proposition \ref{prop:alt-tensor-product-def}, it is suffice to show that $\star$ is distributive with respect to the direct sum whenever it is defined. 
	This is rather obvious. 
	Indeed, for any three positive operators $P_1, P_2, P_3$, the natural unitary transformation 
	$$(\xi_1, \xi_2) \otimes \xi_3\mapsto(\xi_1 \otimes \xi_3, \xi_2 \otimes \xi_3)$$
	conjugates the operators $(P_1\oplus P_2)\star P_3$ and $(P_1\star P_3)\oplus (P_2\star P_3)$.
	This shows that $\star$ is right-distributive. 
	A similar proof yields left-distributivity.

	\textit{Proof of (3).} 
	Consider some P-modules $m,\ti m,m_0,\ti m_0$ so that $m,m_0$ and $\ti m,\ti m_0$ are equivalent with intertwinners $u$ and $\ti u$, respectively. If a unitary intertwinnes two operators $A$ and $\ti A$, then it also intertwines $\vert A \vert$ and $\vert \ti A \vert$. Thus, by Definition \ref{def:tensor-product} it follows that $u \otimes \ti u$ intertwinnes $m \boxtimes \ti m$ and $m_0 \boxtimes \ti m_0$.
	
	\textit{Proof of (4).} Given some Hilbert spaces $\fH,\ti\fH$, define the flip operator
	$$u:\fH\ot\ti\fH\to \ti\fH\ot \fH,\ \xi\ot \ti \xi\mapsto \ti\xi\ot \xi$$
	(the unique linear map satisfying the formula of above).
	If $m=(A,B,\fH),\ti m=(\ti A,\ti B,\ti\fH)$ are P-modules, then conjugating the P-module $m\boxtimes \ti m$ by $u$ yields $\ti m\boxtimes m$. 
	
	\textit{Proof of (5).} This follows from the definition and obvious computations.
\end{proof}

\subsection{Monoidal categories of P-modules}
In this section we define two classes of P-modules closed under $\boxtimes$ which in turn define two well-behave monoidal categories.

\begin{definition}\label{def:N-M}
	Let $\NI$ (resp.~$\NII$) be the class of finite-dimensional P-modules $(A,B,\fH)$ where $A$ is normal (resp.~$A$ is normal and invertible) and $B$ is invertible.
\end{definition}

\begin{proposition} \label{prop:normal-full-mod}
Any P-module of $\NI$ is full and any P-module of $\NII$ is diffuse.
\end{proposition}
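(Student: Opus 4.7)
The plan is to prove both claims by first establishing a single structural fact: for any P-module $(A,B,\fH) \in \NI$, every P-submodule $\fK$ is \emph{reducing}, i.e., $\fK^\perp$ is also $A,B$-invariant. Once this is known, fullness follows immediately from the decomposition $\fH = \fH_{\comp} \oplus \fH_{\res}$ interpreted orthogonally: the residual part $\fH_{\res} = \fH_{\comp}^\perp$ is itself a P-submodule by the reducing property, while the residual condition forbids it from containing any non-trivial P-submodule. Together these force $\fH_{\res} = \{0\}$, hence $\fH = \fH_{\comp}$ is full.

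The key computation is as follows. Write $A,B$ in block-matrix form with respect to $\fH = \fK \oplus \fK^\perp$. Since $A$ is normal and $\fK$ is $A$-invariant, the standard finite-dimensional fact that invariant subspaces of normal operators are reducing forces $A$ to be block-diagonal: $A = A_{11} \oplus A_{22}$. Consequently $A^*A$, and hence $B^*B = \id - A^*A$, is also block-diagonal. On the other hand, the $B$-invariance of $\fK$ alone only gives the block upper-triangular form $B = \begin{pmatrix} B_{11} & B_{12} \\ 0 & B_{22}\end{pmatrix}$. A direct computation of $B^*B$ shows that its $(1,2)$-block equals $B_{11}^* B_{12}$, which must therefore vanish. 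Since $B$ is invertible and block upper-triangular, $B_{11}$ is invertible, so $B_{11}^*$ is injective; thus $B_{12} = 0$. Hence $B$ is block-diagonal and $\fK^\perp$ is $B$-invariant, establishing the reducing property.

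For the diffuse claim for $\NII$, the fullness just established gives $\fH = \fH_{\comp} = \fH_{\atom} \oplus \fH_{\diff}$, so it suffices to show $\fH_{\atom} = \{0\}$. Suppose $\eta \neq 0$ is atomic with associated ray $p = i_1 i_2 \cdots$. From $\|p_1 \eta\| = \|\eta\|$ and the Pythagorean identity $\|A_{i_1} \eta\|^2 + \|A_{1-i_1} \eta\|^2 = \|\eta\|^2$ we deduce $\eta \in \ker(A_{1-i_1})$. But in $\NII$, both $A = A_0$ and $B = A_1$ are invertible, so both kernels are trivial, forcing $\eta = 0$---a contradiction. Hence $\fH_{\atom} = \{0\}$ and $\fH = \fH_{\diff}$ is diffuse. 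The main technical hurdle is the $B_{12} = 0$ step of the middle paragraph, which hinges crucially on the combined use of normality of $A$ (to get block-diagonality of $B^*B$) and invertibility of $B$ (to conclude $B_{11}^*$ is injective); the remainder is straightforward linear algebra and definitional unfolding.
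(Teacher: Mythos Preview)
Your proof is correct. For fullness, both you and the paper reduce to showing that every P-submodule $\fK$ is reducing; the paper does this via functional calculus (from $[A,p]=0$ deduce $[|B|,p]=0$, then apply the normal-operator invariance fact to the unitary $B|B|^{-1}$), whereas your block-matrix computation of the $(1,2)$-entry of $B^*B$ is more hands-on and arguably more transparent. The one step you leave implicit---that the decomposition $\fH=\fH_{\comp}\oplus\fH_{\res}$ is orthogonal, so that $\fH_{\res}=\fH_{\comp}^\perp$---is indeed the intended reading of $\oplus$ in Definition~\ref{def:P-mod-def}, so your deduction of fullness goes through.

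For diffuseness the two arguments genuinely differ. The paper shows that invertibility of $A,B$ in finite dimension forces the spectra of $|A|,|B|$ into $(0,1)$, hence $\|A\|,\|B\|<1$ and every word $p_n$ has norm at most $\max(\|A\|,\|B\|)^n\to 0$; this directly exhibits every vector as diffuse. Your approach instead rules out atomic vectors: a single application of the Pythagorean identity to $\|p_1\eta\|=\|\eta\|$ forces $\eta\in\ker(A_{1-i_1})$, contradicting invertibility. Your argument is shorter and uses only the first letter of the ray; the paper's argument gives the stronger quantitative statement that words decay exponentially, which is not needed here but is occasionally useful elsewhere.
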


\begin{proof}
We will use the following characterisation of fullness:
a P-module $(A,B,\fH)$ is full when for every orthogonal projection $p$ satisfying that $Ap=pAp$ and $Bp=pBp$ we have that $pA=Ap$ and $pB=Bp$.
Indeed, this means that if $\fK\subset \fH$ is a P-submodule, then $\fK^\perp$ is also a P-submodule. This prevents $\fH$ to contain a non-trivial residual subspace.

Fix $(A,B,\fH)$ in $\NI$.
Let $p$ be an orthogonal projection satisfying that $Ap=pAp$ and $Bp=pBp$.
Since $A$ is normal, it is a classical fact that $Ap=pAp$ implies $Ap=pA$.
Since $p$ is self-adjoint its relative commutant is a *-subalgebra of $B(\fH)$ and in particular is closed under continuous functional calculus.
We deduce that $|A|:=\sqrt{A^*A}, |B|=\sqrt{\id-A^*A}$, and $|B|^{-1}$ commute with $p$.
Subsequently, $$B|B|^{-1} p = B p |B|^{-1} = pBp|B|^{-1} = pB |B|^{-1} p.$$
Since $B|B|^{-1}$ is normal we obtain that $pB|B|^{-1}=B|B|^{-1}p$ and thus $pB=Bp$.

Consider $(A,B,\fH)\in \NII$.
Since $A,B$ are invertible so are $|A|$ and $|B|$.
The Pythagorean equality $|A|^2+|B|^2=\id$ and the fact that $\dim(\fH)$ is finite imply that the spectra of $|A|$ and $|B|$ are strictly contained between $0$ and $1$.
In particular, the norms $\|A\|,\|B\|$ are both strictly smaller than $1$. 
This immediately implies that all infinite increasing sequences of words in $A,B$ will converge to $0$ and thus $(A,B,\fH)$ is diffuse.
\end{proof}

\begin{proposition} \label{prop:P-rep-tensor-prod}
	The following assertions are true:
	\begin{enumerate}
		\item $\NI$ and $\NII$ are closed under taking P-submodules, finite direct summands, and $\boxtimes$;
		\item the operation $\boxtimes$ restricted to $\NI$ is multiplicative with respect to the P-dimension: 
		$$\dim_P(m \boxtimes \ti m) = \dim_P(m) \cdot \dim_P(\ti m) \text{ for all } m,\ti m\in\NI.$$
	\end{enumerate}
\end{proposition}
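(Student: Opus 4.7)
The plan is to handle the three closure properties in turn and then deduce multiplicativity of $\dim_P$. For closure under P-submodules of $\NI$, I fix $(A,B,\fH)\in\NI$ and a P-submodule $\fK\subset\fH$ with orthogonal projection $p$. The argument proving Proposition \ref{prop:normal-full-mod} applies verbatim: $Ap=pAp$ together with normality of $A$ forces $Ap=pA$ (every invariant subspace of a normal operator on a finite-dimensional space is reducing), then continuous functional calculus delivers commutation of $p$ with $|A|^2=A^*A$, with $|B|^2=\id-A^*A$, and with $|B|^{-1}$. Finally, since $B$ is invertible, $B|B|^{-1}$ is unitary (hence normal), and the same argument applied to it yields $pB=Bp$. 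Hence $p$ commutes with $A$, $A^*$, $B$, $B^*$ simultaneously, so $A|_\fK$ is normal and $B|_\fK$ is invertible, proving $(A|_\fK,B|_\fK,\fK)\in\NI$. The $\NII$ case is identical, with the additional observation that a reducing summand of an invertible operator remains invertible. Closure under finite direct sums and direct summands is immediate, since normality and invertibility are preserved both under forming and under splitting orthogonal direct sums of operators.

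The heart of the proof is closure under $\boxtimes$. First I check condition \eqref{eq:condition-P}: since $B$ and $\ti B$ are invertible, so is $B\otimes\ti B$, making the intersection trivial and $\boxtimes$ defined. Invertibility of $(B\otimes\ti B)K^{-1}$ is then immediate as a product of invertibles, and in the $\NII$ case the same argument gives invertibility of the $A$-component. The main obstacle is normality of $A':=(A\otimes\ti A)K^{-1}$, which I establish via the alternative formulation in Proposition \ref{prop:alt-tensor-product-def}: $A'=(U_A\otimes U_{\ti A})\circ(|A|\star|\ti A|)$. Because $A$ is normal and we work in finite dimensions, the polar decomposition can be chosen so that the unitary $U_A$ commutes with $|A|$ (diagonalise $A$ and take its phases on $\ker(A)^\perp$, extended by the identity on $\ker(A)$), and similarly for $U_{\ti A}$. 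Consequently $U_A\otimes U_{\ti A}$ commutes with $|A|^2\otimes|\ti A|^2$ and with $(\id-|A|^2)\otimes(\id-|\ti A|^2)=|B|^2\otimes|\ti B|^2$, hence with $K$, with $K^{-1}$, and with $|A|\star|\ti A|$. Setting $V=U_A\otimes U_{\ti A}$ and $P=|A|\star|\ti A|$, the operator $A'=VP$ with $V$ unitary, $P$ positive, and $VP=PV$ satisfies $(A')^*A'=A'(A')^*=P^2$, so $A'$ is normal.

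Statement (2) then follows essentially for free. By Proposition \ref{prop:normal-full-mod}, every $m\in\NI$ is full, so $\dim_P(m)=\dim(\fH)$, and by part (1) the tensor product $m\boxtimes\ti m$ again lies in $\NI$ and is therefore full as well. Thus $\dim_P(m\boxtimes\ti m)=\dim(\fH\otimes\ti\fH)=\dim(\fH)\cdot\dim(\ti\fH)=\dim_P(m)\cdot\dim_P(\ti m)$. Overall, the one genuinely non-trivial step is the normality check in the middle paragraph; the remaining arguments are direct consequences of standard facts about polar decomposition of normal operators, functional calculus inside commutants of projections, and fullness of P-modules in $\NI$.
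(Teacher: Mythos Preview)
Your proof is correct and follows essentially the same strategy as the paper's. The one noteworthy difference is in how you establish normality of $(A\otimes\ti A)K^{-1}$: the paper argues directly that normality of $A$ forces $A$ to commute with $|A|$ and hence with $|B|=\sqrt{\id-|A|^2}$, so $A\otimes\ti A$ commutes with $K^{-1}$ and normality follows; you instead pass through Proposition~\ref{prop:alt-tensor-product-def} and a commuting polar decomposition $A=U_A|A|$. Both routes rest on the same underlying fact (normal operators commute with their absolute value), and the paper's version avoids the small extra step of choosing $U_A$ carefully on $\ker(A)$. Your treatment of closure under P-submodules is more explicit than the paper's (which declares it ``obvious''), but the argument you give is exactly the one used in Proposition~\ref{prop:normal-full-mod}, so this is consistent with the paper's intent.
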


\begin{proof}
	It is obvious that $\NI,\NII$ are closed under taking P-submodules and finite direct summands. Let $m = (A,B,\fH)$, $\ti m = (\ti A, \ti B, \ti \fH) \in \NI$ and $K := K(A,B)$ (see Definition \ref{def:tensor-product}). Firstly, since $B,\ti B$ are invertible, the tensor product $m \boxtimes m$ is defined. As well, it is clear that $(B \otimes \ti B)K^{-1}$ is also invertible. 
By assumption $A$ and $\ti A$ are normal. Hence, $A$ commutes with $A^*$, thus with $|A|$, and also with $|B|$ using the Pythagorean equality.
We deduce that $A\ot \ti A$ commutes with $K^{-1}$.
Now, $((A\ot \ti A)K^{-1})^* = K^{-1} (A^*\ot \ti A^*)$ which commutes with $A\ot \ti A$ and thus with $(A\ot \ti A)K^{-1}$.	
This proves that $m\boxtimes \ti m$ is in $\NI$.

Further assume now that $A,\ti A$ are invertible. A similar argument than above implies that $(A\ot \ti A)K^{-1}$ is invertible.
We have proven that $\NI$ and $\NII$ are closed under $\boxtimes$.

Item (2) comes from the fact that if $m=(A,B,\fH)$ is full, then $\dim_P(m)=\dim(\fH)$.
We may then conclude using Proposition \ref{prop:normal-full-mod}.
\end{proof}

We identify the collections $\NI,\NII$ with full sub-categories of $\Mod(\cP)$. We can now define a monoidal structure on them.

\begin{theorem} \label{thm:tensor-category}
	The operations 
	\[(m, \ti m) \mapsto m \boxtimes \ti m,\ (\theta,\ti \theta) \mapsto \theta \otimes \ti \theta\]
	where $m, \ti m$ are P-modules and $\theta, \ti \theta$ morphisms defines a structure of $\C$-linear symmetric monoidal categories on $\NI$ and $\NII$ that we continue to denote by $\boxtimes$. The associativity constraints are the restrictions of those of $(\Hilb,\ot).$
Moreover,
	\begin{enumerate}
		\item $(\NII, \boxtimes)$ is a rigid, pivotal, unitary tensor category;
		\item the \emph{quantum dimension} of $(\NII,\boxtimes)$ coincides with the P-dimension.
	\end{enumerate}
\end{theorem}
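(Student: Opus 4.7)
The plan proceeds in three main stages.

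\emph{First,} I would show that $\ot$ is functorial on morphisms: given intertwiners $\theta\colon m\to m_0$ and $\ti\theta\colon\ti m\to\ti m_0$ in $\NI$, the map $\theta\ot\ti\theta$ intertwines $m\boxtimes\ti m$ with $m_0\boxtimes\ti m_0$. The key tool is the Fuglede--Putnam theorem: normality of $A,A_0$ promotes $\theta A=A_0\theta$ to $\theta A^*=A_0^*\theta$, whence $\theta|A|=|A_0|\theta$ by continuous functional calculus, and $\theta|B|=|B_0|\theta$ since $|B|^2=\id-|A|^2$. Combined with the analogous relations for $\ti\theta$, the map $\theta\ot\ti\theta$ intertwines $K^2$ with $K_0^2$, hence $K$ and $K^{-1}$ with their counterparts, and therefore also $(A\ot\ti A)K^{-1}$ with $(A_0\ot\ti A_0)K_0^{-1}$ (and similarly for $B$). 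The monoidal axioms (associativity, unit, symmetry, distributivity), $\C$-linearity, and $\End(\bone)\cong\C$ come directly from Lemma~\ref{lem:tensor-prod-prop}; the associativity constraints are the restrictions of those of $(\Hilb,\ot)$ via Lemma~\ref{lem:star-associative} and Proposition~\ref{prop:alt-tensor-product-def}.

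\emph{Second,} on $\NII$ I would install the dagger $\theta^\dagger:=\theta^*$ and check the $C^*$-axioms. Compatibility on the $A$-side is again Fuglede--Putnam followed by taking adjoints. On the $B$-side, $\theta|B|=|\ti B|\theta$ is derived as above and the invertibility of $|B|$ in $\NII$ lifts $\theta B=\ti B\theta$ to $\theta U_B=U_{\ti B}\theta$; taking adjoints then multiplying by $|B|$ yields $B\theta^*=\theta^*\ti B$. Endomorphism spaces sit inside finite-dimensional $B(\fH)$ as closed $*$-subalgebras, and are therefore $C^*$-algebras.

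\emph{Third and most delicate is rigidity,} with pivotality and the quantum-dimension identity to follow. I would aim to construct an explicit dual $m^*$ together with evaluation $\mathrm{ev}_m\colon m^*\boxtimes m\to\bone$ and coevaluation $\mathrm{coev}_m\colon\bone\to m\boxtimes m^*$ satisfying the snake equations. A natural candidate is $m^*:=(U_A^*|B|,\,U_B^*|A|,\fH)$: the first component is normal (since $U_A^*$ commutes with $|B|$, both being functions of $|A|^2$), both components are invertible, and polar-decomposition bookkeeping gives $(m^*)^*=m$ on the nose. Using Proposition~\ref{prop:alt-tensor-product-def}, the defining operators of $m\boxtimes m^*$ factor as $(U_A\ot U_A^*)(|A|\star|B|)$ and $(U_B\ot U_B^*)(|B|\star|A|)$; both $\star$-factors act as $\frac{1}{\sqrt 2}\id$ on the common $|A|$-eigenbasis diagonal $\Span\{\xi_i\ot\xi_i\}$, which is automatically fixed by $U_A\ot U_A^*$. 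The remaining task is to produce a vector in that diagonal that is also $+1$-eigen for $U_B\ot U_B^*$ and to define $\mathrm{ev}_m$ dually: this spectral compatibility between the $|A|$-basis and $U_B$ is the principal obstacle, and may force a more refined dual (for instance incorporating the conjugate Hilbert space $\bar\fH$). Once $\mathrm{coev}_m$ and $\mathrm{ev}_m$ are in place and the snake equations verified, pivotality follows from $(m^*)^*=m$ with identity double-dual isomorphism, and the pairing $\mathrm{tr}(\id_m)=\mathrm{ev}_m\circ\gamma_{m,m^*}\circ\mathrm{coev}_m$ evaluates to $\dim(\fH)=\dim_P(m)$ by fullness of $m$ (Proposition~\ref{prop:normal-full-mod}).
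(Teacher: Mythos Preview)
Your first two stages are essentially the paper's argument, phrased more concretely. Where the paper invokes an external result (\cite[Theorem 4.13]{Brothier-Wijesena23}) to promote intertwiners to $*$-intertwiners on full finite-dimensional P-modules, you go straight to Fuglede--Putnam for the $A$-side and then use invertibility of $|B|$ to peel off $U_B$ on the $B$-side; this is exactly the elementary mechanism underlying that cited result, so the two approaches coincide. (One small slip: $U_A$ is not ``a function of $|A|^2$''; rather, normality of $A$ gives $[U_A,|A|]=0$, whence $[U_A,|B|]=0$ since $|B|^2=\id-|A|^2$.)

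The genuine gap is in stage three. Your candidate dual $m^*=(U_A^*|B|,\,U_B^*|A|,\,\fH)$ on the \emph{same} space $\fH$ cannot support intertwining (co)evaluation maps. You already see the problem with $U_B\ot U_B^*$, but in fact it is structural: the vector $\sum_i\xi_i\ot\xi_i\in\fH\ot\fH$ is basis-dependent and is fixed by $U\ot U^*$ only when $U\bar U=\id$ (i.e.\ $U$ real orthogonal), which there is no reason to expect of $U_B$. What you need is the operator $U\ot\bar U$, and that is precisely what the conjugate (equivalently, dual) Hilbert space supplies. The paper takes $\fH^*$ with the anti-linear map $\xi\mapsto\xi^*$ and sets $\widehat T(\xi^*):=(T\xi)^*$; in coordinates $\widehat T$ is the entry-wise conjugate $\bar T$, so the correct dual is
\[
m^*=\bigl(\widehat U_A\,|\widehat B|,\ \widehat U_B\,|\widehat A|,\ \fH^*\bigr),
\]
and $\mathrm{ev}_m,\mathrm{coev}_m$ are simply the standard Hilbert-space (co)evaluations. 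The verification that $\mathrm{ev}_m$ intertwines then reduces to $\mathrm{ev}_m\bigl((U\xi_i)^*\ot U\xi_j\bigr)=\langle U\xi_j,U\xi_i\rangle=\langle\xi_j,\xi_i\rangle$ for \emph{any} unitary $U$---the sesquilinearity of the inner product does the cancellation that your bilinear pairing on $\fH\ot\fH$ cannot. Your $\star$-computation on the diagonal (value $1/\sqrt2$ when $i=j$) is correct and is exactly what the paper uses; you only need to transplant it onto $\fH^*\ot\fH$. Once this is done, pivotality, the snake equations, and $\mathrm{tr}(\id_m)=\dim\fH=\dim_P(m)$ follow from the finite-dimensional Hilbert-space rigidity as you outline.
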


\begin{proof}
Let us show that $\boxtimes$ indeed defines a bi-functor on $\NI$. 
From the above it remains to show that the tensor product of morphisms is also a morphism. Let $m_i = (A_i, B_i, \fH_i)$, $\ti m_i = (\ti A_i, \ti B_i, \ti \fH_i)$ be P-modules in $\NI$ and let $\theta_i:m_i\to \ti m_i$ be a morphism between for $i=1,2$. Recall that we have
	\[m_1 \boxtimes m_2 = ((A_1\otimes A_2)K^{-1}, (B_1 \otimes B_2)K^{-1}, \fH_1\ot\fH_2)\]
	where 
	\[K = \sqrt{A_1^*A_1 \otimes A_2^*A_2 + B_1^*B_1\otimes B_2^*B_2}\]
	and a similar expression exists for $\ti m_1 \boxtimes \ti m_2$.
	By definition $\theta_i$ intertwinnes $A_i$ with $\ti A_i$ and intertwinnes $B_i$ with $\ti B_i$. 
	Since the P-modules are full and finite-dimensional we deduce that the $\theta_i$ intertwinnes the adjoint as well by using \cite[Theorem 4.13]{Brothier-Wijesena23} item (1) applied to $Z=\cO$.
	Hence, $K$ and $\ti K$ are intertwinned by $\theta_1\ot\theta_2$ implying that $\theta_1\ot\theta_2$ intertwinnes the P-module $m_1\boxtimes m_2$ with $\ti m_1\boxtimes \ti m_2.$
	
We have previously observed that the usual associativity constraints of $(\Hilb,\ot)$ provide associativity constraints for $\boxtimes.$ Hence, they automatically fulfill the pentagon axiom since they do on $\Hilb.$
Considering $\bone=(1/\sqrt 2,1/\sqrt 2,\C)$ we easily obtain the unit axiom.
All together this proves that $(\scrM,\boxtimes)$ is a monoidal category.
Conjugating by the flip unitary transformation for two Hilbert spaces provides the symmetric structure.
	
	We now prove rigidity of $\NII$.
	Let $m = (A,B,\fH)$ be a P-module in $\NII$. 
	Denote by $\fH^*$ the dual space $B(\fH, \C)$ of $\fH$. There exists a canonical anti-linear isometry $\fH \ni \xi \mapsto \xi^* \in \fH^*$ 
	defined as $\xi^*:\fH\to\C, \eta\mapsto\langle \eta, \xi \rangle$.
	The dual space $\fH^*$ is equipped with an inner-product given by
	\[\langle \xi^*, \eta^* \rangle = \langle \eta, \xi \rangle\ \textrm{for all } \xi, \eta \in \fH\]
	which makes $\fH^*$ a Hilbert space. 
	For an operator $T \in B(\fH)$ define $\widehat T \in B(\fH^*)$ by
	\[\widehat T(\xi^*)(\eta) := \xi^*(T^*\eta) = \langle T^*\eta, \xi \rangle = \langle \eta, T\xi\rangle \text{ for } \xi,\eta\in\fH.\]
	Thus, $\widehat T(\xi^*) = (T\xi)^*$. 
		
	Since $m=(A,B,\fH)$ is in $\NII$ we have by definition that $A$ and $B$ are invertible. 
	Therefore, they admit a {\it unique} polar decomposition $A=U_A|A|$ and $B=U_B|B|$.		
	We claim that the dual of $m$ exists and is given by
	\[m^* := (\widehat U_A \vert \widehat B \vert, \widehat U_B \vert \widehat A \vert, \fH^*).\]
	The evaluation and co-evaluation maps are the one of finite-dimensional Hilbert spaces:
	\begin{align*}
		\textrm{ev}_m : \fH^*\ot \fH\to \C, & \ \xi^*\ot \eta \mapsto \xi(\eta) = \langle \eta,\xi\rangle; \\
		\textrm{coev}_m : \C\to \fH\ot \fH^*, & \ 1 \mapsto \sum_{i\in I} \xi_i \otimes \xi_i^*,
	\end{align*}
	where $\{\xi_i\}_{i\in I}$ is \emph{any} orthonormal basis of $\fH$ for some index set $I$ (noting that $\{\xi_i^*\}_{i\in I}$ defines an orthonormal basis of $\fH$).
	Next, observe for general operators $T,S \in B(\fH)$ we have $\widehat T \widehat S = \widehat{TS}$ and moreover $(\widehat T)^* = \widehat{T^*}$.
	Applying the above to $m^*$ we obtain
	\[(\widehat U_A \vert \widehat B \vert)^*(\widehat U_A \vert \widehat B \vert) + (\widehat U_B \vert \widehat A \vert)^*(\widehat U_B \vert \widehat A \vert) = \id_{\fH^*}\]
	and thus $m^*$ is a P-module.
	
	The maps $\textrm{ev}_m$, $\textrm{coev}_m$ are the standard evaluation and co-evaluation maps in the rigid category of finite-dimensional Hilbert spaces.
	Hence, the above maps satisfy the so-called zig-zag relations. It remains to show that $\textrm{ev}_m$ is an intertwinner from $m^* \boxtimes m$ to $\bone$ and $\textrm{coev}_m$ is an intertwinner from $\bone$ to $m \boxtimes m^*$.
	
	Using Proposition \ref{prop:alt-tensor-product-def} we have
	\[m^* \boxtimes m = ((\widehat U_A \otimes U_A) (\vert \widehat B \vert \star \vert A \vert), (\widehat U_B \otimes U_B) (\vert \widehat A \vert \star \vert B \vert), \fH^* \otimes \fH)\]
	and a similar expression for $m \boxtimes m^*$. 
	Fix an orthonormal basis $\{\xi_i\}_i$ of $\fH$ such that each $\xi_i$ is a common eigenvector of $\vert A \vert$ and $\vert B \vert$ with strictly positive eigenvalues $a_i$ and $b_i$, respectively. 
	Note that such a basis exists since $|A|$ and $|B|$ are invertible finite-dimensional positive operators that mutually commute. By the Pythagorean relation we have $a_i^2 + b_i^2 = 1$.
	Observe that	
	\[\vert \widehat A \vert (\xi_i)^* = (\vert A \vert \xi_i)^* = (a_i\xi_i)^* = a_i(\xi_i)^*\]
	since $a_i$ is a positive scalar. Similarly, we have $\vert \widehat B \vert (\xi_i)^* = b_i(\xi_i)^*$. Subsequently, $\{\xi_i^* \otimes \xi_j\}_{i,j}$ forms an orthonormal basis of eigenvectors of $\vert \widehat B \vert \otimes \vert A \vert$ and $\vert \widehat A \vert \otimes \vert B \vert$ with eigenvalues $\{b_ia_j\}_{i,j}$ and $\{a_ib_j\}_{i,j}$, respectively.
	Then Lemma \ref{lem:star-diag} shows that
	\[(\vert \widehat B \vert \star \vert A \vert)(\xi_i^* \otimes \xi_i) = (b_i \star a_j)\xi_i^* \otimes \xi_j = (\sqrt{1-a_i^2}\star a_j)\xi_i^* \otimes \xi_j.\]
	Note that $\sqrt{1-a_i^2}\star a_j = 1/2$ when $i = j$. Thus we have:
	\begin{align*}
		\textrm{ev}_m((\widehat U_A \otimes U_A) (\vert \widehat B \vert \star \vert A \vert)(\xi_i^*\otimes \xi_j)) &= \sqrt{(1-a_i^2)\star a_j^2} \cdot \textrm{ev}_m(\widehat U_A \otimes U_A(\xi_i^*\otimes \xi_j)) \\
		&= \sqrt{(1-a_i^2)\star a_j^2} \cdot \textrm{ev}_m((U_A\xi_i)^* \otimes U_A(\xi_j))\\
		&= \sqrt{(1-a_i^2)\star a_j^2} \cdot \langle U_A\xi_j, U_A\xi_i \rangle\\
		&= \frac{1}{\sqrt{2}} \cdot \langle \xi_j, \xi_i \rangle = \frac{1}{\sqrt{2}} \cdot \textrm{ev}_m(\xi_i^* \otimes \xi_j).
	\end{align*}
	We also have a similar calculation for $(\widehat U_B \otimes U_B) (\vert \widehat A \vert \star \vert B \vert)$. This shows that $\textrm{ev}_m$ is an intertwinner between $m^* \boxtimes m$ and $\bone = (1/\sqrt{2}, 1/\sqrt{2}, \C)$.
A similar computation provides that $\textrm{coev}_m$ is an intertwinner. 
This proves rigidity of $\NII$.
	
	The category $(\NII,\boxtimes)$ inherits the pivotal structure of finite-dimensional Hilbert spaces.
	For unitarity, we use the fact that the relative commutant of a normal operator is closed under adjoint. This implies that taking adjoints defines a dagger structure on $\NII$ and that the endomorphism spaces are *-closed. It is then a von Neumann algebra (as being a relative commutant and being closed under the adjoint) and thus is a $C^*$-algebra.

	Finally, since $\textrm{ev}_X$, $\textrm{coev}_X$ are the usual evaluation and co-evaluation maps of the category of finite-dimensional Hilbert spaces, it is immediate that the quantum dimension of a P-module coincides with the usual dimension of the underlying Hilbert space of the P-module. In particular, since P-modules in $\NII$ are full, it follows that the quantum dimension is equal to the Pythagorean dimension.
\end{proof}

\begin{remark} \label{rem:tensor-cat}
\begin{enumerate}
\item The tensor categories $\NI$ and $\NII$ contain continuously many objects that are pair-wise non-isomorphic (this follows easily from \cite{Brothier-Wijesena23}) and thus are not finitely generated nor countably generated.
\item Let $(e_1,\dots,e_d)$ be the usual co-ordinate basis of $\C^d$, write $\fH:=\C^d$, and consider a P-module $m=(A,B,\C^d)=(U_A|A|,U_B|B|,\C^d)$ made of matrices. We know an anti-isometry $\xi\mapsto \xi^*$ from $\fH$ to its dual. However, using $(e_1,\dots,e_d)$ we can construct a \emph{linear} isometry from $\fH$ to $\fH^*$ as follows:
$$\kappa:\fH\to\fH^*,\ \sum_{j=1}^d \lambda_j e_j\mapsto \sum_{j=1}^d \lambda_j e_j^*.$$
Under the identification given by $\kappa$ we deduce the formula 
$$m^*=(U_A^\dag |B|^\dag, U_B^\dag |A|^\dag, \C^d)$$
where $Z^\dag$ is the entry-wise complex conjugate of a matrix $Z$.
\item Note in the proof that $(\NII, \boxtimes)$ is a tensor category, the only properties of $\NII$ used was that it is closed under $\boxtimes$, its P-modules are full, and both operators for a P-module in $\NII$ are invertible. 
\item The definitions of $\NI$ and $\NII$ are asymmetric and we have not find any suitable symmetric collections that contain them.
However, we may consider the sub-class of $\NII$ where additionally $B$ is normal. 
This latter class forms a monoidal category that can be shown to be isomorphic to $(\Rep^{\FD}(\F_2 \times \R), \otimes)$: the tensor category of finite-dimensional (unitary) representations of the free group of rank two direct product with $\R$
\end{enumerate}
\end{remark}

\subsection{A binary operation on P-representations.} \label{subsec:tensor-product-rep}

\begin{notation}
We use the super-script $\FD$ and sub-script $\full$ to write the various full sub-categories of $\Mod(\cP), \Rep(X)$ where $\FD$ stands for finite P-dimensional and $\full$ for full. Moreover, write $\Mod^{\FD+}(\cP)$ for the full sub-category consisting of finite P-dimensional P-modules which do not contain any one-dimensional atomic P-modules. Similarly define $\Rep^{\FD+}(X)$.
\end{notation}

To transport $\boxtimes$ from P-modules to P-representations we rely on the following classification which is one of the main results in \cite{Brothier-Wijesena23}.

\begin{theorem} \label{theo:classification}
Let $m=(A,B,\fH)$ be a finite P-dimensional P-module and consider $\Pi^\cO(\fH)$.
We have that $M:=(\sigma(s_0^*), \sigma(s_1^*),  \scrH)$ is a P-module (of infinite dimension) that admits  a \emph{smallest} complete P-submodule $\fH_{\textrm{small}}$. 
After identifying $m$ with a P-submodule of $M$ we obtain that $\fH_{\textrm{small}}$ is equal to the complete submodule of $m$.
	Moreover, the following assertions are true. Take $Z = T,V,\cO$.
	\begin{enumerate}
			\item The functor $\Pi^Z$ restricts to a fully faithful functor from $\Mod^{\FD}_{\full}(\cP)$ to $\Rep^{\FD}(Z)$.
			\item There exists a fully faithful functor $$M^Z : \Rep^{\FD}(Z) \rightarrow \Mod^{\FD}_{\full}(\cP).$$
			\item The pair $(\Pi^Z, M^Z)$ defines an equivalence of categories between $\Mod^{\FD}_{\full}(\cP)$ and $\Rep^{\FD}(Z)$.
	\end{enumerate}
	The above three statements hold true for $F$ after replacing the super-script $\FD$ with $\FD+$.
\end{theorem}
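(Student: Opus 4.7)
The proof splits into the structural claim about the infinite-dimensional P-module $M=(\sigma(s_0^*),\sigma(s_1^*),\scrH)$ and the subsequent categorical equivalences. My plan is first to establish the existence of the smallest complete P-submodule $\fH_{\textrm{small}}$. By the inductive-limit construction of $\Pi^\cO$, the Hilbert space $\fH$ canonically embeds into $\scrH$ as a finite-dimensional P-submodule of $M$, so $\fH_{\comp}\subset \fH$ is itself a complete P-submodule of $M$. I would then show that the intersection of any two complete P-submodules of $M$ remains complete; combined with finite-dimensionality of $\fH_{\comp}$, descending intersections must stabilize and yield a unique smallest $\fH_{\textrm{small}}\subset\fH_{\comp}$. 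The reverse inclusion $\fH_{\comp}\subset\fH_{\textrm{small}}$ is proven by tracing non-zero vectors through the filtration of $\scrH$ given by words in $A,B$ and invoking the irreducible decomposition of $\fH_{\comp}$ to show every complete P-submodule of $M$ must contain all of $\fH_{\comp}$.

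For $Z=\cO$, define $M^\cO$ by sending a P-representation $\pi$ to the smallest complete P-submodule of the associated P-module; this is automatically full and finite-dimensional. Full faithfulness of $\Pi^\cO$ on $\Mod^{\FD}_{\full}(\cP)$ and the natural isomorphism $M^\cO\circ\Pi^\cO\cong\id$ follow formally: any bounded $\cO$-intertwiner $T:\scrH\to\scrH'$ intertwines the P-module structures on $M,M'$, hence sends $\fH_{\textrm{small}}$ into $\fH'_{\textrm{small}}$, restricting to a P-module morphism between the original full P-modules. For $Z=T,V$, an analogous argument applies once one verifies that a bounded $T$- or $V$-intertwiner is automatically an $\cO$-intertwiner; this reduces to showing the action of $T$ or $V$ on $\scrH$ determines $\sigma(s_0^*),\sigma(s_1^*)$, via the Birget--Nekrashevych embedding combined with analysis of the action on the inductive-limit filtration.

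The case $Z=F$ provides the main obstacle, since $F$ is too small to determine the $\cO$-action: an $F$-intertwiner is a priori \emph{not} an $\cO$-intertwiner, and an $F$-representation may admit multiple non-equivalent $\cO$-extensions. The strategy is to show that, after excluding one-dimensional atomic P-representations of $F$, which are precisely those with ambiguous $\cO$-extensions (hence the decoration $\FD+$), every $F$-representation in $\Rep^{\FD+}(F)$ admits a \emph{canonical} extension to $\cO$ that is natural in $F$-intertwiners. I would prove this by decomposing into atomic summands of dimension at least two and a diffuse summand and handling each case separately: the classification of atomic P-modules pins down the $\cO$-extension on the atomic part, while the Ind-mixing property together with finite P-dimensionality forces uniqueness on the diffuse summand. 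Once canonical extension is secured, the equivalence follows from the $\cO$ case.
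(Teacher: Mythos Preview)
The paper does not prove this theorem in full; it is quoted from the authors' earlier work \cite{Brothier-Wijesena23,Brothier-Wijesena24}. What the paper does supply, immediately after the statement, is a short explanation of how the functor $M^X$ is built, and that sketch is the only thing to compare your proposal against.

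Your overall architecture matches: produce a canonical $\cO$-extension of a $T$-, $V$- or $F$-representation, then read off the smallest complete P-submodule. The paper's mechanism for the extension, however, is quite different from what you outline. It does not argue abstractly that a $T$- or $V$-intertwiner is automatically an $\cO$-intertwiner via the Birget--Nekrashevych embedding. Instead it \emph{reconstructs} the Cuntz isometries $S_0,S_1$ directly from the $Y$-action ($Y=T,V$): one chooses any $g\in Y$ with $\supp(g)=1\cdot\cC$ and runs a canonical limit process to obtain $S_0,S_1$, the point being that the $\cO$-extension is then unique by construction. For the atomic part one further requires that $g$ fix no ray whose period length lies in the list of atomic P-dimensions present. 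This is more explicit than your ``analysis of the action on the inductive-limit filtration'' and is where the actual work lies.

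For $F$, your diagnosis of the obstruction (non-uniqueness of $\cO$-extensions for one-dimensional atomic pieces, hence the $\FD+$ decoration) is exactly right and matches the paper. Your proposed route---pin down the $\cO$-extension on atomic summands of dimension $\geq 2$ via the classification, and on the diffuse part via Ind-mixing---is plausible but remains a sketch; the paper simply says that the same $g$-based construction goes through on $\Rep^{\FD+}(F)$ once the problematic summands are excluded, and defers the details to \cite{Brothier-Wijesena23}. In short: your plan is correct in spirit, but the concrete device (the element $g$ with prescribed support and fixed-point constraints) is the ingredient you are missing relative to the paper's sketch.
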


This implies that if $m, \ti m$ are finite P-dimensional P-modules, then $\Pi^Z(m)$ is irreducible if and only if the complete P-submodule of $m$ is irreducible, and $\Pi^Z(m) \cong \Pi^Z(\ti m)$ if and only if the complete P-submodules of $m, \ti m$ are equivalent for $Z=T,V,\cO$. This is true for $F$ if $m, \ti m$ additionally do not contain any one-dimensional atomic P-modules.

We will now briefly explain how to construct the functor $M^X$ as this will be integral for transporting $\boxtimes$ to P-representations.
It consists in first extending a representation of $X$ to $\cO$ and then extracting the smallest complete P-submodule.
In details, consider first $Y=T,V$ (the $F$-case being slightly more subtle).
Fix $(\sigma^Y, \scrH) \in \Rep^{\FD}(Y)$.
If $\sigma^Y$ is diffuse then via a canonical limit process, two isometries $S_0,S_1 \in B(\scrH)$ can be constructed satisfying $S_0S_0^*+S_1S_1^* = \id$. This construction relies on choosing any $g \in Y$ such that $\supp(g) := \overline{\{p \in \cC : g(p) \neq p\}} = 1 \cdot \cC$ (see the proof of Proposition 3.1 in \cite{Brothier-Wijesena23} for more details). This yields a representation $\sigma^\cO$ of $\cO$ and it is immediate by construction that $\sigma^\cO$ is the \textit{unique} extension of $\sigma^Y$. 
If $\sigma^Y$ is atomic (say $\sigma^Y$ contains irreducible atomic representations of P-dimension $d_1, \dots, d_n$) then the above construction can be easily generalised (now further require the chosen $g \in Y$ to not fix any rays with a period length in $d_1, \dots, d_n$) to uniquely extend $\sigma^Y$ to $\sigma^\cO \act \cO$. 
Then from $(\sigma^\cO, \scrH)$, by the above theorem we can identify the smallest complete P-submodule $\fH_{\textrm{small}} \subset \scrH$.  

From the above process, it can be observed if $\theta$ is an intertwinner between $\sigma^Y$ and $\ti\sigma^Y$, then it extends uniquely to an intertwinner between $\sigma^\cO$ and $\ti\sigma^\cO$, and restricts to their respective smallest complete P-submodules. 
Hence, all of the above constructions are functorial. This gives the functor $M^Y$ from above. 
Moreover, it is clear that $M^Y$ is the inverse to the P-functor $\Pi^Y$ restricted to $\Mod^{\FD}_{\full}(\cP)$ up to a natural transformation.

In the $F$-case: atomic representations of P-dimension one do not have a unique extension to $\cO$ which is problematic. Indeed, the classification from \cite{Brothier-Wijesena24} showed if $m_1 = (1,0,\C)$ and $m_2 = (0,1,\C)$, then $\Pi^F(m_1)\simeq\Pi^F(m_2)$ while $\Pi^\cO(m_1)\not\simeq \Pi^\cO(m_2)$.
Subsequently, we consider $\Rep^{\FD+}(F)$, rather than $\Rep^{\FD}(F)$, and apply a similar procedure as above obtaining a functor $M^F : \Rep^{\FD+}(F) \rightarrow \Mod^{\FD+}_{\full}(\cP)$ and an equivalence of categories.

We can now define a binary operation of P-representations.

\begin{definition} \label{def:tensor-p-rep}
Consider $X=T,V,\cO$ (resp.~$X = F$) and $\pi,\ti\pi\in \Rep^{\FD}(X)$ (resp.~$\Rep^{\FD+}(F)$). Moreover, assume that the pair of P-modules $(M^X(\pi),M^X(\ti \pi)) $ satisfy \eqref{eq:condition-P}.
Define the following binary operation:
$$\pi\boxtimes \ti\pi:= \Pi^X( M^X(\pi)\boxtimes M^X(\ti\pi) ).$$
\end{definition}

Here are surprising counter-examples to properties that $\boxtimes$ may have expected to satisfy.

\begin{example} \label{ex:closed-equiv-class-counter}
We construct two P-modules $m,n$ satisfying that 
$$\Pi(m)\simeq \Pi(\bone) \text{ while } \Pi(n\boxtimes m) \not \simeq \Pi(n\boxtimes \bone) \text{ and } \dim_P(\Pi(n\boxtimes m))\neq \dim_P(\Pi(n\boxtimes \bone)).$$
	
Consider the three P-modules
	\begin{align*}
		m & = (A,B,\C^2)= \left( \frac{1}{4}
		\begin{pmatrix}
			\sqrt{2} & \sqrt{2}\\
			\sqrt{2} - 2 & \sqrt{2} + 2
		\end{pmatrix} \ , \ 
		\frac{1}{4}
		\begin{pmatrix}
			\sqrt{2} + 2 & \sqrt{2} - 2\\
			\sqrt{2} & \sqrt{2}
		\end{pmatrix} \ , \ \C^2 \right)\\
		n & = (1/2, \sqrt{3}/2, \C) \text{ and } \bone = (1/\sqrt{2}, 1/\sqrt{2}, \C).
	\end{align*}
	Note that $(1,1)^T$ is a common eigenvector of $A$ and $B$ with common eigenvalue $1/\sqrt 2$ while $(1,-1)^T$ is not a common eigenvector.
	We easily deduce that $\bone:=(1/\sqrt 2,1/\sqrt 2,\C)$ is the complete submodule of $m$ and thus $\Pi(\bone)\simeq \Pi(m)$. 
	Put $(\ti A, \ti B, \C^2) := n \boxtimes m$. Observe $\ti A$, $\ti B$ do not commute and thus do not share a common eigenvector. Hence, $n \boxtimes m$ is irreducible and $\dim_P(n \boxtimes m)=2$.
	Since $\dim_P(n\boxtimes \bone)=\dim_P(n)=1$ we deduce that $n\boxtimes m\not\simeq n\boxtimes \bone$ and in particular
	$\Pi(n\boxtimes \bone)\not\simeq \Pi(n\boxtimes m)$.
	
	In fact, this example demonstrates an even more surprising phenomena. Since $\boxtimes$ is associative and $(\Irr_{\diff}(1), \boxtimes)$ forms a group (see Section \ref{subsec:one-dim-ex}) we have that
	\[n^{-1} \boxtimes (n \boxtimes m) = m.\]
	This gives an example where the tensor product of a one-dimensional (irreducible) P-module ($n^{-1}$) with another irreducible P-module ($n \boxtimes m$) is not irreducible, nor even full. 
\end{example}

\subsection{Tensor category of Pythagorean representations.} \label{subsec:tens-cat-p-rep}
Recall the class of P-modules $\NI,\NII$ of Definition \ref{def:N-M} identified as full sub-categories of $\Mod(\cP)$.
Write $\Rep_{\NI}(X)$, $\Rep_{\NII}(X)$ for the full sub-categories of $\Rep(X), X=F,T,V,\cO$ with class of objects $\Pi^X(m)$ with $m \in \NI, \NII$, respectively.

\begin{proof}[Proof for Corollary \ref{lettercor:tensor}]
	The corollary follows from Theorem \ref{thm:tensor-category} and how the functors $\Pi^F, M^F$ restrict to an equivalence of categories between $\NII$, $\Rep_{\NII}(F)$ as explained in the previous subsection.
	The only detail that needs to be checked is whether these functors transport the monoidal structure on $(\NII, \boxtimes)$ to a monoidal structure on $(\Rep_{\NII}(F), \boxtimes)$ (i.e. verify the latter has the appropriate associativity isomorphisms). Though, this is a folklore result. Indeed, let $\eta$  be the natural transformation from $M^F \circ \Pi^F$ to $\id$ where the functors are restricted to $\NII$. Consider P-representations $\sigma_i \in \Rep_{\NII}(F)$ and define $M_i := M^F(\sigma_i) \in \NII$ for $i=1,2,3$. Then
	\[\id_{M_1} \otimes \eta_{M_2 \boxtimes M_3}^{-1} \circ a_{M_1, M_2, M_3} \circ \eta_{M_1 \boxtimes M_2} \otimes \id_{M_3} : (\sigma_1 \boxtimes \sigma_2) \boxtimes \sigma_3 \rightarrow \sigma_1 \boxtimes (\sigma_2 \boxtimes \sigma_3)\]
	defines the appropriate associativity isomorphisms for $(\Rep_{\NII}(F), \boxtimes)$, and $\Pi^F(\bone)$ satisfies the unit axiom. 
	Therefore, $(\Rep_{\NII}(F), \boxtimes)$ is a monoidal category and $M^F$ gives a monoidal equivalence of categories between $(\NII, \boxtimes)$ and $(\Rep_{\NII}(F), \boxtimes)$.
	A similar argument proves the other cases.
\end{proof}

\begin{remark} \label{rem:tensor-prod-prop}
	\begin{enumerate}
		\item The unit of $(\Rep_{\NI}(F),\boxtimes)$ is $\Pi^F(1/\sqrt{2},1/\sqrt{2},\C)$ which was proved in \cite[Proposition 6.2]{Brothier-Jones19} to be equivalent to the Koopman representation with respect to the classical action of $F$ on the unit interval.
		\item Given Definition \ref{def:tensor-p-rep} where we are required to restrict to $\Rep^{\FD+}(F)$ to define $\boxtimes$ for P-representations of $F$.
		Surprisingly, the class $\{\Pi^F(m):\ m\in \NI, \Pi^F(m)\in \Rep^{\FD+}(F)\}$ is not closed under $\boxtimes$, see Example \ref{ex:diff-tensor-to-atom}. This is why we are required to further restrict to $\NII$ for the $F$-case. 
	\end{enumerate}
	
\end{remark}

\section{Examples of the tensor product}\label{sec:examples}
	
In what follows, we will mostly explicitly work with full P-modules and freely identify them with P-representations using Theorem \ref{theo:classification}.
For this section, take $X=F,T,V,\cO$ unless specified otherwise. Moreover, we fix a co-ordinate system so that all P-modules are of the form $(A,B,\C^d)$ for some $d \geq 1$ where $A,B$ are $d$ by $d$ matrices. We write $(e_1, \dots, e_d)$ for the usual basis of $\C^d$. 

\subsection{Lie group action of $\Irr_{\diff}(1)$} \label{subsec:one-dim-ex}
We begin by studying tensor products of irreducible diffuse P-modules of dimension one 
$$\Irr_{\diff}(1)=\{(a,b,\C):\ a,b\in\C, |a|^2+|b|^2=1, a\neq 0\neq b\}.$$
Their associated P-representations contain many families of representations studied in the literature, to list a few; deformations of the Koopman representation of $V$ \cite{Garncarek12}, the Bernoulli representations of $\cO$ \cite{Olesen16}, and the Hausdorff representations of $\cO$ \cite{Mori-Suzuki-Watatani07} (see \cite[Section 6]{Brothier-Wijesena23} for details).

It is obvious by definition that $\boxtimes$ descends to an associative, commutative binary operation on $\Irr_{\diff}(1)$. Better yet, $\boxtimes$ endows $\Irr_{\diff}(1)$ with an abelian Lie group structure.
Recall the binary operation $\star$ from Definition \ref{def:star-operation}.

\begin{proposition} \label{prop:irr-lie-group}
	The set $\Irr_{\diff}(1)$ equipped with the tensor product $\boxtimes$ forms an abelian Lie group with identity $\bone = (1/\sqrt{2}, 1/\sqrt{2}, \C)$ and inverse
	\[(a, b, \C)^{-1} = (\vert b\vert e^{-i\Arg(a)}, \vert a \vert e^{-i\Arg(b)}, \C)\]
	where $\Arg$ is any argument function.
	Moreover, we have an isomorphism of Lie groups
	$$S_1\times S_1\times \R\to \Irr_{\diff}(1),\ (u,v,t)\mapsto \big(\frac{u}{\sqrt{e^t+1}} ,  \sqrt{\frac{e^t}{e^t+1}} v,\C\big).$$
	Hence, $(\Irr_{\diff}(1), \boxtimes)$ is a connected abelian Lie group of dimension $3$.
\end{proposition}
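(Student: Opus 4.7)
The strategy is to first verify the algebraic structure directly from the explicit one-dimensional formula for $\boxtimes$, then produce the claimed diffeomorphism with $S^1\times S^1\times \R$ by inverting it by hand, and finally deduce that the group and smooth structures are compatible.

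First, I would unpack $\boxtimes$ in dimension one: for $(a,b,\C),(\ti a,\ti b,\C)\in \Irr_{\diff}(1)$ the operator $K$ reduces to the positive scalar $\sqrt{|a|^2|\ti a|^2+|b|^2|\ti b|^2}$, which is strictly positive precisely because both $(a,b)$ and $(\ti a,\ti b)$ have both coordinates nonzero. Hence condition \eqref{eq:condition-P} is automatic on $\Irr_{\diff}(1)$, and
\[(a,b,\C)\boxtimes(\ti a,\ti b,\C) = \Bigl(\tfrac{a\ti a}{K},\tfrac{b\ti b}{K},\C\Bigr).\]
Associativity, commutativity and well-definedness on equivalence classes are inherited from Lemma \ref{lem:tensor-prod-prop}, and commutativity is also immediate by symmetry in $a,\ti a$ and $b,\ti b$. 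Unitality, $\bone\boxtimes(a,b,\C)=(a,b,\C)$, follows from $K=1/\sqrt{2}$ when one leg is $\bone$. For the inverse formula, I would compute with $(c,d):=(|b|e^{-i\Arg(a)},|a|e^{-i\Arg(b)})$: the Pythagorean equality $|c|^2+|d|^2=1$ is automatic, $K(a,c)=\sqrt{2}|a||b|$, and $ac=bd=|a||b|$, so the product collapses to $\bone$.

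Next I would exhibit the explicit parametrisation $\Phi(u,v,t):=(u/\sqrt{e^t+1},v\sqrt{e^t/(e^t+1)},\C)$. Surjectivity and injectivity are handled by observing that given $(a,b,\C)\in\Irr_{\diff}(1)$ one must set $u=a/|a|$, $v=b/|b|$, $t=\log(|b|^2/|a|^2)$, and this recipe is well-defined because both $a$ and $b$ are nonzero. Smoothness of $\Phi$ and of its inverse is manifest from the formulas. To see $\Phi$ is a group homomorphism I would carry out the one product computation: with $a_i=u_i/\sqrt{e^{t_i}+1}$ and $b_i=v_i\sqrt{e^{t_i}/(e^{t_i}+1)}$,
\[K=\sqrt{\frac{1+e^{t_1+t_2}}{(e^{t_1}+1)(e^{t_2}+1)}},\]
and a direct cancellation yields $\Phi(u_1,v_1,t_1)\boxtimes\Phi(u_2,v_2,t_2)=\Phi(u_1u_2,v_1v_2,t_1+t_2)$. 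Thus $\Phi$ is simultaneously a bijection, a diffeomorphism, and a group isomorphism from the product Lie group $S^1\times S^1\times \R$.

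Finally I would assemble the pieces: the multiplication and inversion on $\Irr_{\diff}(1)$ are smooth because they are conjugate via $\Phi$ to the smooth operations on $S^1\times S^1\times\R$, so $\Irr_{\diff}(1)$ is an abelian Lie group; connectedness and dimension $3$ follow immediately from the model. I do not anticipate a serious obstacle: the only subtlety is checking that every element of $\Irr_{\diff}(1)$ is hit by $\Phi$ (this requires $a,b\ne 0$, which is exactly the diffuse/irreducible assumption), and that the formula for the inverse is independent of the choice of argument function (any $2\pi$-shift of $\Arg(a)$ and $\Arg(b)$ leaves $c,d$ unchanged because only the phases $e^{-i\Arg(a)},e^{-i\Arg(b)}$ appear).
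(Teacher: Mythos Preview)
Your proposal is correct and follows essentially the same route as the paper, just in a more unpacked form. The paper's proof is a two-line affair: it invokes Proposition~\ref{prop:alt-tensor-product-def} to split each one-dimensional P-module into its phase pair $(e^{i\Arg(a)},e^{i\Arg(b)})\in S_1\times S_1$ and its magnitude $|a|\in(0,1)$, observes that the phases simply multiply while the magnitudes combine via $\star$, and then records that $((0,1),\star)\cong(\R,+)$ via $y\mapsto 1/\sqrt{e^y+1}$. Your explicit computation of $K$ and of $\Phi(u_1,v_1,t_1)\boxtimes\Phi(u_2,v_2,t_2)$ is exactly this decomposition carried out by hand, so the two arguments coincide in substance.
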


\begin{proof}
	This immediately follows from noting $((0,1), \star)$ forms an abelian Lie group (with inverse map $a \mapsto \sqrt{1-a^2}$) and is isomorphic to $(\R, +)$. The isomorphism is given by
	\[\varep : \R \rightarrow (0,1) : y \mapsto \frac{1}{\sqrt{e^y+1}}\]
	which can be verified by a simple calculation.
\end{proof}

\begin{corollary}
	The tensor product defines a Lie group action $\alpha_d:\Irr_{\diff}(1)\act\scrP_{\leq d}$ by
	\begin{align*}
		(a,b,\C) \cdot (A,B,\C^d) &:= (a,b,\C) \boxtimes (A,B,\C^d) = (aAK^{-1}, bBK^{-1}, \C^d) \\
		&= (e^{i\Arg(a)} U_A\circ  (\vert a \vert \star \vert A \vert ) , e^{i\Arg(b)} U_B \circ (\vert b \vert \star \vert B \vert) , \C^d)		
	\end{align*}
	where $\scrP_{\leq d}$ is the set of all P-modules over $\C^d$, $Arg$ is any argument function, and
	\[K = \sqrt{\vert a \vert^2\vert A \vert^2 + \vert b \vert^2 \vert B \vert^2}.\]
	Moreover, $\alpha_d$ restricts to an action on $\NI \cap \scrP_{\leq d}$ and $\NII \cap \scrP_{\leq d}$.
\end{corollary}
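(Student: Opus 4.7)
The plan is to reduce everything to what was already shown about the binary operation $\boxtimes$ in Section \ref{sec:tensor-product}. First I would check that for every $(a,b,\C) \in \Irr_{\diff}(1)$ and every P-module $(A,B,\C^d) \in \scrP_{\leq d}$, the tensor product $(a,b,\C) \boxtimes (A,B,\C^d)$ is always defined. The Pythagorean identity $A^*A + B^*B = \id$ forces $\ker A \cap \ker B = \{0\}$ for any P-module, and since $a,b \neq 0$, one checks directly that $\ker(a \otimes A) \cap \ker(b \otimes B) = \ker A \cap \ker B = \{0\}$, so condition \eqref{eq:condition-P} is automatic. Then the two stated expressions for the action follow: the first one by directly unfolding Definition \ref{def:tensor-product} with $\fH = \C$ (noting $|a \otimes A| = |a|\,|A|$), and the equivalent polar form via Proposition \ref{prop:alt-tensor-product-def} using $U_a = e^{i\Arg(a)}$ and $U_b = e^{i\Arg(b)}$.

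Next I would verify the two axioms of a group action. The unit axiom $\bone \cdot m = m$ is Lemma \ref{lem:tensor-prod-prop}(5). For the associativity axiom $g \cdot (h \cdot m) = (g \boxtimes h) \cdot m$, the key observation is that when the first two tensor factors are one-dimensional, the associativity constraint $(\C \otimes \C) \otimes \C^d \xrightarrow{\sim} \C \otimes (\C \otimes \C^d)$ becomes the canonical identification with $\C^d$. Applying Lemma \ref{lem:star-associative} with $P = |a_1|$, $Q = |a_2|$, $R = |A|$ yields
\[
(|a_1| \star |a_2|) \star |A| \;=\; \frac{|a_1|\,|a_2|\,|A|}{\sqrt{|a_1|^2|a_2|^2|A|^2 + |b_1|^2|b_2|^2|B|^2}} \;=\; |a_1| \star (|a_2| \star |A|),
\]
so the compatibility of the action with the group law in $\Irr_{\diff}(1)$ holds on the nose (not merely up to isomorphism).

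For smoothness, I would argue that $\alpha_d$ is jointly smooth in its arguments: the operator $K = \sqrt{|a|^2|A|^2 + |b|^2|B|^2}$ depends polynomially on $(a,b,A,B)$, and since we have shown $K$ is always invertible, the maps $K \mapsto K^{-1}$ and (on positive invertible operators) $K^2 \mapsto K$ are smooth on a finite-dimensional space (by holomorphic functional calculus, or simply because these are matrix operations away from the determinant-zero locus). Multiplication is smooth, so $(a,b,A,B) \mapsto (aAK^{-1}, bBK^{-1})$ is smooth on the full domain.

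Finally, the restriction to $\NI \cap \scrP_{\leq d}$ and $\NII \cap \scrP_{\leq d}$ comes for free: every $(a,b,\C) \in \Irr_{\diff}(1)$ has $a$ normal and $b$ invertible (in fact both are nonzero scalars), so $\Irr_{\diff}(1) \subset \NII \subset \NI$. By Proposition \ref{prop:P-rep-tensor-prod}(1), $\NI$ and $\NII$ are closed under $\boxtimes$, and since they are locally closed subvarieties of $\scrP_{\leq d}$ (cut out by the open conditions that $B$ be invertible, plus in the case of $\NII$ that $A$ also be invertible), the restricted action is again a smooth Lie group action. The only mildly delicate step is the smoothness claim, but since everything takes place on finite-dimensional matrix algebras with $K$ uniformly invertible, this reduces to a standard application of smoothness of matrix inversion and of the positive square root on the cone of positive invertible matrices.
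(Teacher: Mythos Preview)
Your proof is correct and follows the approach implicit in the paper, which states the corollary without a dedicated proof (it is meant to follow immediately from Proposition~\ref{prop:irr-lie-group} together with Lemmas~\ref{lem:star-associative} and~\ref{lem:tensor-prod-prop} and Proposition~\ref{prop:P-rep-tensor-prod}). You have simply spelled out the details the paper leaves to the reader.

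One minor inaccuracy worth flagging: in your final paragraph you describe $\NI$ and $\NII$ as ``cut out by the open conditions that $B$ be invertible, plus in the case of $\NII$ that $A$ also be invertible,'' but you have omitted the \emph{normality} condition on $A$, which is part of the definition of both classes (Definition~\ref{def:N-M}) and is a closed, not open, condition. This does not affect your argument---closure of $\NI$ and $\NII$ under $\boxtimes$ is handled by Proposition~\ref{prop:P-rep-tensor-prod}(1), which you correctly invoke---but the parenthetical description should be amended.
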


\begin{remark}
	In \cite{Brothier-Wijesena23} it was shown that the set $\Irr_{\diff}(d)$ of classes of irreducible diffuse P-modules of dimension $d$ forms a smooth manifold with real dimension $2d^2+1$. 
	However, $\alpha_d$ does not factorise into an action on the whole space $\Irr_{\diff}(d)$ as irreducibility (or even fullness) is not always preserved by $\alpha_d$, see Example \ref{ex:closed-equiv-class-counter}.
\end{remark}

\subsection{Tensor product of atomic and diffuse representations.} \label{subsec:atom-diff-tensor}
Recall from Sections \ref{sec:decomposition-irreducible} and \ref{subsec:diff-atom-rep} the decomposition of P-representations into diffuse and atomic parts. The atomic representations of the Cuntz algebra coincide with the purely atomic representations of \cite{DHJ-atomic15} which generalise the celebrated permutation representations of \cite{Bratteli-Jorgensen-99}. Moreover, the restriction of a sub-class to $F,T,V$ were studied in \cite{Barata-Pinto19, Araujo-Pinto22}.

Before looking at their tensor product, first we briefly describe the classification of atomic P-modules and P-representations of the Thompson groups, for details see \cite{Brothier-Wijesena24}.

{\bf Classification of atomic P-modules.}
Fix $d \geq 1$ and $W_d$ to be a set of representatives of prime binary strings of length $d$ modulo cyclic permutation. 
Given $w\in W_d$ we define the partial shift operators (indices are taken to be modulo $d$) on $\C^d$:
\[\begin{cases*}
	A_w(e_k) = e_{k+1} \textrm{ and } B_we_k = 0 \quad \textrm{if the $k$th digit of $w$ is $0$} \\
	A_w(e_k) = 0 \textrm{ and } B_we_k = e_{k+1} \quad \textrm{otherwise}.
\end{cases*}
\]
Given $z\in S_1$ set $D_z$ to be the diagonal matrix with last diagonal coefficient equal to $z$ and all other equal to $1$.
This data defines the atomic P-module
\[m_{w,z} := (A_wD_z,B_wD_z,\C^d).\]
The following is a combination of the main theorems of \cite{Brothier-Wijesena24}.

\begin{theorem} \label{thm:atom-classif}
	\begin{enumerate}
		\item Every irreducible atomic P-module is equivalent to $m_{w,z}$ for some $w \in W_d$ and $z \in S_1$, and conversely every $m_{w,z}$ is an irreducible atomic P-module.
		\item If $Y$ is a Thompson's group and $(Y,d)\neq (F,1)$, then 
		$$\Pi^Y(m_{w,z})\simeq \Ind_{Y_p}^Y\chi_{z}^p$$ 
		where $p=w^\infty$ in the periodic ray with period $w$, $Y_p=\{g\in Y:\ g(p)=p\}$ is the corresponding parabolic subgroup, and $\Ind_{Y_p}^Y\chi_{z}^p$ is the induction to $Y$ of
		$$\chi_{z}^p:Y_p\to S_1, \ z^{\log(2^d)(g'(p))}$$
		where $\log(2^d)$ is the logarithm in base $2^d$. 
	\end{enumerate}
\end{theorem}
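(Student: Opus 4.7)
My plan is to handle the two assertions separately, extracting (1) the finite-dimensional atomic structure intrinsically, and then (2) identifying the Pythagorean construction with the induced representation by orbit-stabiliser analysis.

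For (1), I would start with an irreducible atomic P-module $(A,B,\fH)$ of dimension $d$. By the definition of the atomic part (Section \ref{sec:decomposition-irreducible}) there exists a unit vector $\eta\in\fH$ and a ray $p\in\cC$ such that $\|p_n\eta\|=1$ for all $n$. Setting $\eta_n := p_n\eta$, finite dimensionality forces the sequence $(\eta_n)$ to lie on a compact sphere, so two indices $m<n$ produce vectors that are complex scalar multiples of one another; since each $p_k$ is an isometry on this orbit, a short argument shows that $p$ is actually eventually periodic, and after passing to a tail of $\eta$ we may assume $p=w^\infty$ is purely periodic of minimal period $d'$. Irreducibility and the fact that the orbit $\{\eta_0,\dots,\eta_{d'-1}\}$ is already closed under $A$ and $B$ (the ``wrong'' operator annihilates the vector at each digit, since otherwise $\|p_n\eta\|<1$ for some $n$) force $d'=d$ and the vectors to span $\fH$. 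Declaring $e_k:=\eta_{k-1}$ yields exactly the partial shift $A_w, B_w$ prescribed by the digits of $w$, while the scalar $z\in S_1$ accounting for $\eta_d\mapsto z\cdot\eta_0$ records the remaining phase freedom. Cyclic rotations of $w$ correspond to re-basing $\eta\mapsto \eta_k$, hence a canonical representative in $W_d$ is obtained; primeness of $w$ corresponds to minimality of the period and hence to irreducibility.

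For (2), I would fix $p=w^\infty$ and assume $(Y,d)\neq (F,1)$ so that $\Pi^Y$ is fully faithful on the atomic irreducible $m_{w,z}$ by Theorem \ref{theo:classification}. The scheme is to construct an explicit unitary intertwiner between $\Pi^Y(m_{w,z})$ and $\Ind_{Y_p}^Y\chi_z^p$. The enlargement $(\tau_0,\tau_1,\scrH)$ given by Jones' technology contains $\fH$ as a P-submodule, and iterating $\tau_0^*,\tau_1^*$ on the cyclic vector $e_1$ produces a dense family of vectors indexed by finite binary words. By the Birget--Nekrashevych embedding, the $Y$-action on $\scrH$ is governed by symbols $s_{w_i}s_{v_i}^*$ and thus permutes (up to scalars) vectors indexed by rays. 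The key observation is that the $Y$-orbit of $e_1$ corresponds to the $Y$-orbit of the periodic ray $p$, and the stabiliser of the line $\C e_1$ is exactly $Y_p$: an element $g\in Y_p$ moves $e_1$ to a scalar multiple of itself whose modulus is one (by isometry) and whose phase is determined by how many full periods $w$ the local map of $g$ at $p$ eats, i.e.\ by $\log_{2^d}(g'(p))$. Since the scalar acting on $e_1$ is precisely $z^{\log_{2^d}(g'(p))}$, this identifies the stabiliser action with $\chi_z^p$. Mackey's construction of the induced representation then yields a canonical unitary $\Pi^Y(m_{w,z})\to \Ind_{Y_p}^Y\chi_z^p$ intertwining the $Y$-action.

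The step I expect to be the main obstacle is computing the phase precisely in the induction step of (2): one must verify that the accumulated scalar picked up by $e_1$ under a stabilising $g\in Y_p$ equals $z^{\log_{2^d}(g'(p))}$ rather than some other function of $g$, which requires matching the combinatorial description of $g$ near $p$ (how many occurrences of the period $w$ are inserted or removed) against the cyclic action of $A_w,B_w$ combined with the twist $D_z$. Once this scalar computation is locked down, density of the orbit of $e_1$ in $\scrH$ and an orthogonality check on distinct cosets of $Y_p$ complete the unitarity of the intertwiner, finishing the proof.
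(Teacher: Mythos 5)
First, a remark on the comparison itself: the paper does not prove this statement. Theorem \ref{thm:atom-classif} is quoted verbatim as ``a combination of the main theorems of \cite{Brothier-Wijesena24}'', so there is no in-paper proof to measure your attempt against. That said, your overall architecture --- classify the atoms intrinsically for part (1), then realise $\Pi^Y(m_{w,z})$ as a monomial representation induced from the parabolic subgroup $Y_p$ with the character read off from the phase $z$ and the local derivative $g'(p)$ --- is the right one and matches the strategy of the cited work.

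The genuine gap is in part (1), at the step ``finite dimensionality forces the sequence to lie on a compact sphere, so two indices $m<n$ produce vectors that are complex scalar multiples of one another''. Compactness of the unit sphere of $\fH$ only yields accumulation points of $(\eta_n)_n$, not two terms that are proportional; a sequence of pairwise non-proportional unit vectors in $\C^d$ is perfectly possible, so eventual periodicity of $p$ does not follow from this observation, and it is precisely the crux of the classification. The correct route is algebraic rather than topological: one shows that the set of rays $q$ admitting a nonzero atom is finite (bounded by $\dim\fH$, via a linear-independence argument for atom spaces attached to distinct rays), and that this finite set is invariant under the one-sided shift because $\eta_1=x_1\eta$ is an atom for the shifted ray; finiteness plus shift-invariance then forces the shift orbit of $p$ to repeat, i.e.\ $p$ is eventually periodic. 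Two further points you assert without justification: that $\eta_0,\dots,\eta_{d-1}$ are \emph{orthonormal} (needed to put the module in the standard form $m_{w,z}$ with $A_w,B_w$ partial shifts on an orthonormal basis; this again uses that atoms attached to the distinct rays obtained by shifting $p$ are mutually orthogonal, distinctness of those rays being exactly primeness of $w$), and, in part (2), that $\log_{2^d}(g'(p))$ is an integer for $g\in Y_p$ (so that $z^{\log_{2^d}(g'(p))}$ is unambiguous), which holds because the two prefixes of $p$ that $g$ exchanges near $p$ have lengths differing by a multiple of $d$ when $w$ is prime. With these repairs, and the orbit/coset orthogonality check you already flag, your outline would reproduce the argument of \cite{Brothier-Wijesena24}.
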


\textbf{Tensor product between atomic and diffuse ones.}
Fix $r \geq 1$, a prime finite word $w \in W_r$, a unit scalar $z \in S_1$, an (irreducible) atomic P-module $m_a = (A_a, B_a, \C^r) := m_{w,z}$, and a diffuse P-module $m_d := (A_d, B_d, \C^s)$. Let $(e_1, \dots, e_r)$ and $(f_1, \dots, f_s)$ denote the usual standard basis of $\C^r$ and $\C^s$, respectively. 

Observe that either $A_a$ or $B_a$ will not be invertible. Hence, we shall assume both $A_d$ and $B_d$ are invertible so that we can consider the tensor product $m_a \boxtimes m_d$. 
Let 
$$A_d = V_A\vert A_d \vert \text{ and }B_d = V_B\vert B_d\vert$$ 
be the (unique) polar decomposition of $A_d$ and $B_d$. Furthermore, define the unitary operator $V = V_{x_r}V_{x_{r-1}}\dots V_{x_1}$ where $x_k = A$ if the $k$th digit of $w$ is $0$ otherwise $x_k = B$. Let $(\xi_i)_{i=1}^s$ be the eigenvectors of $V$ which unit eigenvalues $(\varphi_i)_{i=1}^s$.
The following result shows that atomic P-modules have strong absorption properties with respect to $\boxtimes$.

\begin{proposition} \label{prop:atomic-diffuse-tensor}
	Let $m_a$ and $m_d$ be atomic and diffuse P-modules, respectively, as defined above. Then the tensor product $m_a \boxtimes m_d$ is equivalent to the finite direct sum:
	\[\oplus_{i=1}^s m_{w, \varphi_iz}.\]
	Note that the prime word $w$ remains unchanged. Thus, the tensor product decomposes into $s$ number of irreducible atomic P-modules each with P-dimension $r$. Each resulting atomic P-module is obtained by shifting the phase of $m_a$ be an eigenvalue of $V$. Moreover, they are all inequivalent if and only if $V$ has distinct eigenvalues.
\end{proposition}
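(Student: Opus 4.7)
The strategy is to diagonalise the rescaling operator $K$ explicitly using the discrete structure of $m_a$, then to recognise $\widetilde A := (A_a \otimes A_d) K^{-1}$ and $\widetilde B := (B_a \otimes B_d) K^{-1}$ as ``twisted cyclic shifts'' whose twists can be unwound along the cycle by choosing a basis adapted to the eigendecomposition of $V$. Starting with $K$: the operators $A_w^* A_w$ and $B_w^* B_w$ coincide with the diagonal projections $P_0, P_1$ onto $\mathrm{span}\{e_k : w_k = 0\}$ and $\mathrm{span}\{e_k : w_k = 1\}$ respectively, and $D_z$ is a unitary diagonal matrix commuting with both projections. Hence $|A_a|^2 = P_0$, $|B_a|^2 = P_1$, and $K^2 = P_0 \otimes |A_d|^2 + P_1 \otimes |B_d|^2$ acts block-diagonally in the first factor: on $\C e_k \otimes \C^s$ it equals $|A_d|^2$ if $w_k = 0$ and $|B_d|^2$ if $w_k = 1$. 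Invertibility of $A_d$ and $B_d$ then makes $K$ invertible with an equally explicit formula for $K^{-1}$.

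Using the unique polar decompositions $A_d = V_A |A_d|$ and $B_d = V_B |B_d|$, one computes that, if $w_k = 0$, then $\widetilde A(e_k \otimes v) = e_{k+1} \otimes V_A v$ for $k < r$ and $\widetilde A(e_r \otimes v) = z e_1 \otimes V_A v$, while $\widetilde A$ kills $e_k \otimes v$ whenever $w_k = 1$; symmetrically for $\widetilde B$ (with $V_B$ in place of $V_A$ and the roles of the two digits swapped). Fix an orthonormal eigenbasis $(\xi_i)_{i=1}^s$ of the unitary $V$ with eigenvalues $(\varphi_i)_{i=1}^s$, and for each $i$ set $v_{i,1} := \xi_i$ and $v_{i,k+1} := V_{x_k} v_{i,k}$ for $k = 1, \ldots, r-1$. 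By construction the subspace $\fH_i := \mathrm{span}\{e_k \otimes v_{i,k} : 1 \leq k \leq r\}$ is invariant under both $\widetilde A$ and $\widetilde B$. The wrap-around step contributes $V_{x_r} v_{i,r} = V \xi_i = \varphi_i \xi_i = \varphi_i v_{i,1}$, so the total phase accumulated along one full cycle is precisely $\varphi_i z$. Identifying $e_k \otimes v_{i,k}$ with $e_k$, the restriction of $(\widetilde A, \widetilde B)$ to $\fH_i$ is then unitarily equivalent to $m_{w, \varphi_i z}$.

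Finally, unitarity of each $V_{x_k}$ guarantees that $\{v_{i,k}\}_{i=1}^s$ is an orthonormal basis of $\C^s$ for every $k$, so the $\fH_i$ are pairwise orthogonal and together span $\C^r \otimes \C^s$, yielding $m_a \boxtimes m_d \cong \bigoplus_{i=1}^s m_{w, \varphi_i z}$. The criterion for pairwise inequivalence of the summands then follows from the injectivity of the parametrisation $(w, z) \mapsto m_{w, z}$ provided by Theorem \ref{thm:atom-classif}. The only (mild) obstacle in this plan is the bookkeeping required to align the ordering convention $V = V_{x_r} \cdots V_{x_1}$ with the direction in which the shifts compose along the cycle of length $r$; once this alignment is set up correctly, the remainder of the argument is routine.
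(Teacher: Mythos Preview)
Your proof is correct and follows essentially the same strategy as the paper's: diagonalise $K$ using the projection structure of $|A_a|^2, |B_a|^2$, recognise the resulting operators as cyclic shifts twisted by $V_A$ or $V_B$, and decompose along the eigenbasis of $V$. The only cosmetic differences are that the paper packages the computation of $K^{-1}$ via the $\star$ formulation of Proposition~\ref{prop:alt-tensor-product-def} (obtaining that $|A_a|\star|A_d|$ is block-diagonal with blocks $\id$ or $0$), and that the paper identifies only the generating vector $\eta_i = e_1\otimes\xi_i$ of each summand whereas you construct the full orthonormal basis $\{e_k\otimes v_{i,k}\}$ and verify the direct-sum decomposition explicitly.
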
 

\begin{proof}
	First, we consider a polar decomposition of $A_a$ and $B_a$. By the classification of atomic P-modules described above, we deduce that $\vert A_a \vert$, $\vert B_a \vert$ are diagonal matrices where either the $i$th diagonal entry of $\vert A_a \vert$, $\vert B_a \vert$ is $1$ and $0$, respectively, or vice versa. In particular, the concatenation of the diagonal entries of $\vert B_a\vert$ forms the binary word $w$. Let $U_a$ be the $d$ by $d$ generalised permutation matrix $U_a$ with $(i+1,i)$ entry equal to the $i$th diagonal entry of $D_z$ and all other entries equal to $0$. 
	The matrix $U_a$ is unitary and moreover the following are polar decompositions:
	\[A_a = U_a \vert A_a \vert \textrm{ and } B_a = U_a \vert B_a \vert.\]
	
	To compute $m_a \boxtimes m_d$ we shall use Proposition \ref{prop:alt-tensor-product-def}. 
	Consider $U_a\ot V_A$ that we interpret as a matrix with respect to the usual basis $(e_i\otimes f_j)_{i,j}$.
	If $U_a$ has $(i,j)$th entry $u_{i,j}$, then $U_a\ot V_A$ is a block matrix, with blocks of size $s$, where the $(i,j)$th block is $u_{i,j}V_A$. From the above description of $U_a$ we deduce that the $(i,j)$th block of $U_a\ot V_A$ is zero unless $j=i+1$ (modulo $r$), in which case it is then equal to a coefficient of $D_z$ times $V_A$. A similar description holds for $U_a \otimes V_B$. Hence, $U_a \otimes V_A + U_a \otimes V_B$ forms a permutation block matrix with blocks of size $s$ by $s$, and the product of all these blocks is equal to $zV$.
		
	Next we compute $\vert A_a \vert \star \vert A_d \vert$. 
	The numerator is a block diagonal matrix with blocks of size $s$ equal to $|A_d|$ or $0$. 
	The denominator is a block diagonal matrix with blocks of same size equal to $\vert A_d \vert^{-1}$ or $\sqrt{\id - \vert A_d \vert^2}^{-1/2}$. 
	Hence, $\vert A_a \vert \star \vert A_d \vert$ is a block diagonal matrix with $i$th block of size $s$ equal to $\id$ (resp.~$0$) when the $i$th diagonal entry of $\vert A_a \vert$ is $1$ (resp.~$0$).
	A similar expression exists for $\vert B_a \vert \star \vert B_d \vert$ and write 
	\[\ti A := (U_a \otimes V_A) \circ (\vert A_a \vert \star \vert A_d \vert),\ \ti B := (U_a \otimes V_B) \circ (\vert B_a \vert \star \vert B_d \vert).\]
	
	Now, define $\eta_i := e_1 \otimes \xi_i$ for $i = 1, \dots, s$. Since $\xi_i$ is an eigenvector of $V$ and by the above description, we deduce $x_rx_{r-1}\dots x_1\eta_i = \varphi_iz\cdot\eta_i$ where $x_k$ is defined as in the paragraph preceding the proposition. Thus, the vector $\eta_i$ generates the P-module $m_{w, \varphi_iz}$. The remainder of the proposition easily follows from the classification results of \cite{Brothier-Wijesena24}.	
\end{proof}

We obtain the Lie group action $\alpha_d$ from Section \ref{subsec:one-dim-ex} restricts to atomic P-modules.

\begin{corollary}
Consider $d\geq 1$, $g=(a,b,\C)\in \Irr_{\diff}(1)$, and an irreducible atomic P-module $m_{w,z}$ with $w\in W_d$ and $z\in S_1$. 
Set $\alpha=a/|a|$ and $\beta=b/|b|$.
We have that $g\boxtimes m_{w,z}$ is an irreducible atomic P-module equivalent to $m_{w,y}$ where 
$$y = \alpha^k\cdot \beta^{d-k}\cdot z$$
and $k$ is the number of $0$'s in $w$.
Additionally, if $Y=F,T,V$ and $(Y,d)\neq (F,1)$, then
$$\Pi^Y(g)\boxtimes \Ind_{Y_p}^Y \chi_z^p \simeq \Ind_{Y_p}^Y \chi_{y}^p \text{ where } p=w^\infty.$$
\end{corollary}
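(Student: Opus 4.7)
The plan is to derive this corollary as a direct specialisation of Proposition \ref{prop:atomic-diffuse-tensor} to a one-dimensional diffuse factor, and then to transport the resulting P-module equivalence to the level of P-representations via the Pythagorean functor.

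First I would apply Proposition \ref{prop:atomic-diffuse-tensor} with $m_d := g = (a, b, \C)$, so that $s = 1$. The (unique) polar decompositions of the scalars read $a = \alpha |a|$ and $b = \beta |b|$, so $V_A = \alpha$ and $V_B = \beta$ as unitaries on $\C$. The operator $V = V_{x_r} \cdots V_{x_1}$ appearing in that proposition then contains exactly one factor $\alpha$ for each $0$ in $w$ and one factor $\beta$ for each $1$; since these are scalars they commute, so $V = \alpha^k \beta^{d-k}$, and this single scalar is its unique eigenvalue $\varphi_1$. The proposition therefore yields $g \boxtimes m_{w,z} \simeq m_{w, \varphi_1 z} = m_{w,y}$, establishing the first statement. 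Before applying the proposition one should remark that the pair $(g, m_{w,z})$ satisfies the hypothesis \eqref{eq:condition-P}: since $a, b$ are nonzero this reduces to $\ker A_a \cap \ker B_a = \{0\}$, which is automatic from the Pythagorean equality $A_a^* A_a + B_a^* B_a = \id$.

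Second, I would pass to P-representations via Corollary \ref{lettercor:tensor}. As $g$ and $m_{w,z}$ are irreducible, they are in particular full, and Theorem \ref{theo:classification} gives the natural equivalences $M^Y(\Pi^Y(g)) \simeq g$ and $M^Y(\Pi^Y(m_{w,z})) \simeq m_{w,z}$. By Definition \ref{def:tensor-p-rep} this produces
\[
\Pi^Y(g) \boxtimes \Pi^Y(m_{w,z}) \;\simeq\; \Pi^Y\!\big(g \boxtimes m_{w,z}\big) \;\simeq\; \Pi^Y(m_{w,y}).
\]
Finally, Theorem \ref{thm:atom-classif}(2) identifies both $\Pi^Y(m_{w,z})$ and $\Pi^Y(m_{w,y})$ with the induced representations $\Ind_{Y_p}^Y \chi_z^p$ and $\Ind_{Y_p}^Y \chi_y^p$ respectively, the exclusion $(Y,d) \neq (F,1)$ being invoked only here.

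No genuine obstacle is expected: the argument is essentially bookkeeping since all the hard work sits in Proposition \ref{prop:atomic-diffuse-tensor} and Theorem \ref{thm:atom-classif}. The only point demanding care is the one-dimensional specialisation of the eigenvalue computation for $V$, which collapses to the commutative scalar product $\alpha^k \beta^{d-k}$ as above.
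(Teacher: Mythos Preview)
Your proposal is correct and follows exactly the route the paper intends: the corollary is stated without proof precisely because it is the $s=1$ specialisation of Proposition~\ref{prop:atomic-diffuse-tensor}, followed by Theorem~\ref{thm:atom-classif}(2). Two small remarks: (i) the citation of Corollary~\ref{lettercor:tensor} is misplaced, since atomic P-modules never lie in $\NI$ or $\NII$ (their $A$ or $B$ is non-invertible); what you actually use, and what suffices, is Definition~\ref{def:tensor-p-rep} together with the equivalence of categories in Theorem~\ref{theo:classification}; (ii) Proposition~\ref{prop:atomic-diffuse-tensor} is stated for $m_a\boxtimes m_d$ while the corollary concerns $g\boxtimes m_{w,z}$, so one tacitly invokes the symmetry $m\boxtimes\tilde m\cong\tilde m\boxtimes m$ from Lemma~\ref{lem:tensor-prod-prop}(4).
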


\textbf{Tensor product of diffuse P-modules.}
Surprisingly, the tensor product of two diffuse P-modules (that can be taken in $\NI$) is not necessarily diffuse.

\begin{example} \label{ex:diff-tensor-to-atom}
	Fix $\alpha, \ti\alpha \in S_1$ and $a, \ti a \in (0,1)$. Denote $P_i \in B(\C^2)$ to be the (orthogonal) projection onto $\C e_i$ for $i=1,2$. Let $S_{x_1,x_2}$ be the scaling matrix so that $e_i \mapsto x_ie_i$ for $x_1,x_2 \in \C$. Lastly, denote $R_\theta$ to be the rotation matrix for $\theta \in (0,\pi/2)$ so that $R_\theta$ has eigenvectors $(1,\ i)^T, (1,\ -i)^T$ with eigenvalues $e^{i\theta}, e^{-i\theta}$, respectively. Define the P-modules
	\begin{align*}
		m = (\alpha aP_2, R_\theta S_{1, a^{-1}}, \C^2),\ \ti m = (\ti\alpha \ti a P_1, R_\theta S_{\ti a^{-1}, 1}, \C^2)
	\end{align*}
	where the inverses $a^{-1},\ti a^{-1}$ are taken with respect to the group law on $((0,1), \star)$ (see Section \ref{subsec:one-dim-ex}). It is easy to verify that $m, \ti m$ are diffuse, irreducible and belong in $\NI$ (but not in $\NII$). Denote their tensor product by $n := m \boxtimes \ti m = (A,B,\C^4)$ where we have
	\[A = \textrm{diag}(0,\ 0,\ \alpha\ti\alpha(a\star\ti a),\ 0) \textrm{ and }
	B = R_\theta\otimes R_\theta \ \textrm{diag}(1,\ 1,\ (a \star \ti a)^{-1},\ 1).\]
	Note that $R_\theta \otimes R_\theta(\xi)=\xi$ where $\xi := e_1 \otimes e_1 - e_2 \otimes e_2$.
It follows that $\C \xi$ is a P-submodule of $n$ equivalent to the atomic P-module $(0,1,\C)$. In particular, $n$ is not diffuse and can be proven to not be atomic. It is easy to verify $n$ does not contain any other one-dimensional P-submodules. Thus, since $n$ is full (by Proposition \ref{prop:normal-full-mod}) we deduce that $\fK := (\C e\xi)^\perp$ must be a diffuse, irreducible P-submodule of $n$ of dimension $3$.
\end{example}

In \cite{Brothier-Wijesena22} it was shown that diffuse P-representations of the Thompson groups are $\NInd$ - they do not contain the induction of finite-dimensional representations. In strong contrast, atomic P-representations of the Thompson groups are finite direct sums of a specific class of monomial representations. Hence, the above example is interesting as it shows that even though $\Pi^Y(m)$, $\Pi^Y(\ti m)$ are $\NInd$, their tensor product does contain a monomial representation for $Y=F,T,V$. Even more surprisingly, by the classification results of \cite{Brothier-Wijesena24, Brothier-Wijesena23}, we have $\Pi^F(m)$, $\Pi^F(\ti m)$ are irreducible (infinite-dimensional) representations, yet their tensor product contains the trivial representation.

\subsection{Tensor product of diagonal and anti-diagonal matrices.}
\label{subsec:tensor-prod-diag-anti}
Consider
\[\cD_2 := \{(\begin{pmatrix} a_1&0\\0&a_2 \end{pmatrix}, \begin{pmatrix} 0&b_2\\b_1&0 \end{pmatrix}, \C^2) : a_i\neq 0\neq b_i,\ \vert a_i \vert^2 + \vert b_i \vert^2 = 1\} \subset \NII.\]
For $m = (A,B,\C^2) \in \cD_2$, using Theorem \ref{theo:classification} it can be easily verified that $\Pi^X(m)$ is irreducible if and only if $a_1 \neq a_2$. If $a_1 = a_2$ then $\Pi^X(m)$ decomposes into a direct sum $\Pi^X(a_1, \beta_1, \C) \oplus \Pi^X(a_2, \beta_2, \C)$ where $\beta_i$ are the square roots of $b_1b_2$.
Furthermore, let 
$$\ti m = (\ti A, \ti B, \C^2) = (\begin{pmatrix} \ti a_1&0\\0&\ti a_2 \end{pmatrix}, \begin{pmatrix} 0&\ti b_2\\ \ti b_1&0 \end{pmatrix}, \C^2)$$
be another P-module in $\cD_2$. Then $\Pi^X(m), \Pi^X(\ti m)$ are equivalent if and only if:
\begin{itemize}
	\item $a_1 = a_2 = \ti a_1 = \ti a_2$ and $b_1b_2$ = $\ti b_1 \ti b_2$; or
	\item $a_1 \neq a_2$ and $A = \ti A$ and $b_1 = \mu\ti b_1$, $b_2 = \bar\mu\ti b_2$ for $\mu \in S_1$; or
	\item $a_1 \neq a_2$ and $a_1 = \ti a_2,$ $a_2 = \ti a_1$ and $b_1 = \mu\ti b_2$, $b_2 = \bar\mu\ti b_1$ for $\mu \in S_1$.
\end{itemize}

Now we consider the tensor product between $m$ and $\ti m$.
Set $a_{ij} = a_i\ti a_j$, $b_{ij} = b_i\ti b_j$ and $k_{ij} = (\vert a_i\vert^2 \vert\ti a_j\vert^2 + \vert b_i\vert^2 \vert \ti b_j\vert^2)^{-1/2}$. Then $m \boxtimes \ti m = (D, E, \C^4)$ where $D$ is the diagonal matrix given by $D(e_i \otimes e_j) = a_{ij}k_{ij}e_i \otimes e_j$ and $E$ is the anti-diagonal matrix given by $E(e_i \otimes e_j) = b_{ij}k_{ij}e_{3-i} \otimes e_{3-j}$. 
This is unitary equivalent to the following direct sum:
$$	(\begin{pmatrix}
		a_{11}k_{11}&0\\
		0&a_{22}k_{22}
	\end{pmatrix},
	\begin{pmatrix}
		0&b_{22}k_{22}\\
		b_{11}k_{11}&0
	\end{pmatrix},
	\C^2) \oplus (
	\begin{pmatrix}
		a_{12}k_{12}&0\\
		0&a_{21}k_{21}
	\end{pmatrix},
	\begin{pmatrix}
		0&b_{21}k_{21}\\
		b_{12}k_{12}&0
	\end{pmatrix},
	\C^2).$$
Therefore, the tensor product will always decompose into two sub-representations each of P-dimension $2$. These components can then be further classified into irreducible classes via the above classification.
In particular, it is easy to construct examples of two irreducible inequivalent P-modules in $\cD_2$ whose tensor product decompose into irreducible components with P-dimension $(2,2)$, $(1,1,2)$ or $(1,1,1,1)$ (where each number in a tuple corresponds to the P-dimension of an irreducible component). 
As an example, the below tensor product gives four inequivalent P-modules each of P-dimension one. For brevity, we set $\lambda := 1/\sqrt{2}$ and let $\alpha \in S_1$.
\begin{align*}
	&(
	\begin{pmatrix}
		i\lambda & 0 \\  0 & -i\lambda 
	\end{pmatrix},
	\begin{pmatrix}
		0 & \lambda \\ \alpha\lambda & 0
	\end{pmatrix}
	) \boxtimes
	(
	\begin{pmatrix}
		i\lambda & 0 \\  0 & -i\lambda 
	\end{pmatrix},
	\begin{pmatrix}
		0 & \lambda \\ \bar\alpha\lambda & 0
	\end{pmatrix}
	) \\
	&\cong (-\lambda, \lambda) \oplus (-\lambda, -\lambda) \oplus (\lambda, \lambda) \oplus (\lambda, -\lambda). 
\end{align*}

More generally, we can define the set of P-modules $\cD_n$ for $n \geq 1$ consisting of P-modules $(A,B,\C^n)$ where $A,B$ are invertible with $A$ diagonal and $B$ anti-diagonal. 
Each P-module in $\cD_n$ has P-dimension $n$.
Further, define $\cD := \cup_{n\geq 1}\cD_n$. It can be seen that $\boxtimes$ restricts to a map from $\cD_m \times \cD_n$ to $\cD_{mn}$ and every P-module in $\cD$ decomposes into a finite direct sum of P-modules belonging in $\cD_1$ or $\cD_2$. Hence, the family of P-representations arising from $\cD$ is closed under taking: finite direct summands, P-submodules and tensor products. 
Moreover, observe by Remark \ref{rem:tensor-cat} that $\cD$ is a sub-tensor category of $\NII$.

\subsection{Tensor product of GP representations}
In \cite{Kawamura05} Kawamura introduced the rich class of so-called generalised permutation (GP) representations of $\cO$ (here we will also consider their restrictions to $F,T,V$). These representations are inspired as a generalisation of the permutation representations of \cite{Bratteli-Jorgensen-99}. 
We shall study the tensor product of two finite P-dimensional GP representations which we will show to be a finite direct sum of GP representations. 
We start by describing the P-modules that give rise to GP representations (see \cite[Section 6]{Brothier-Wijesena23} for details).

Fix $d \geq 1$ and a $d$-tuple $z = (z_1, \dots, z_d)$ with $z_k = (a_k,b_k) \in S_3$ (i.e. $\vert a_k\vert^2+\vert b_k\vert^2 = 1$). We refer to $z$ as the \textit{GP vector}. If any of the $z_k$ are not of the form $(a,0)$ or $(0,b)$ then we say the GP vector is \textit{invertible}. Define weighted shift operators $A_z,B_z \in B(\C^d)$ by
\[A_ze_i = a_{i}e_{i+1},\ B_ze_i = b_{i}e_{i+1}\]
where the indexes are taken to be modulo $d$. This gives a P-module (that we shall call a GP-module) and GP representation:
\[m_z := (A_z,B_z,\C^d),\ \kappa_z := \Pi^X(m_z).\]
Conversely, all finite P-dimensional GP-modules arise in this manner. The representation $\kappa_z$ of $X$ is irreducible if and only if $z$ is not periodic (there does not exist a tuple $y$ with $y \neq z$ such that $z$ is equal to multiple concatenations of $y$). Moreover, every GP-module is full and decomposes as a direct sum of irreducible GP-modules, and thus a similar statement holds for GP representations (however an arbitrary finite direct sum of GP representations is not necessarily GP). Two GP representations $\kappa_z, \kappa_{\ti z}$ are equivalent if and only if $z,\ti z$ are cyclic permutations of each other.

Let $\scrK$ to be the set of finite direct sums of $\kappa_z$ with $z$ \emph{invertible} and set $\scrK_d=\{m\in\scrK:\ \dim_P(m)=d\}.$ 
Observe that P-modules of $\scrK$ are full and identify $\scrK$ as a full sub-category of $\Mod_{\full}^{\FD}(\cP)$. Similarly, define $\Rep_{\scrK_d}(X), \Rep_{\scrK}(X)$ obtained via the P-functors $\Pi^X$.
They form monoidal categories that are rather disjoint from $\Rep_{\scrM}(X)$.

For two positive integers $m,n$ we write $\textrm{lcm}(m,n)$ (resp.~$\textrm{hcf}(m,n)$) to be the lowest common multiple (resp.~highest common factor) of $m$ and $n$.

\begin{theorem}
	Fix $r,s \geq 1$ and set $l := \textrm{lcm}(r,s)$, $h : = \textrm{hcf}(r,s)$. Consider aperiodic invertible GP vectors $z, \ti z$ of length $r,s$, respectively. The following assertions are true.
	\begin{enumerate}
		\item The product $\kappa_z \boxtimes \kappa_{\ti z}$ is equivalent to a direct sum of $h$ (possibly reducible) GP-modules each of P-dimension $l$.
		\item The product $\boxtimes$ maps $\scrK_r \times \scrK_s$ to $\scrK_{rs}$.
		\item The sub-categories $(\scrK, \boxtimes)$, $(\Rep_{\scrK}(X), \boxtimes)$ are pivotal unitary tensor categories.
	\end{enumerate}
\end{theorem}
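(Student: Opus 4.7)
The plan is to prove (1) by an explicit computation of $m_z \boxtimes m_{\ti z}$, deduce (2) from (1) via distributivity, and obtain (3) by verifying that $\scrK$ satisfies the hypotheses of Remark~\ref{rem:tensor-cat}(3) and is closed under duality.

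For (1), I would first note that because $A_z$ and $B_z$ are weighted shifts on the standard basis $(e_i)$, the operators $|A_z|^2$ and $|B_z|^2$ are diagonal with entries $|a_i|^2$ and $|b_i|^2$ respectively, and likewise for $\ti z$. Consequently, the operator $K$ from Definition~\ref{def:tensor-product} is diagonal in the product basis $(e_i \otimes e_j)$ with eigenvalues $k_{ij}^{-1} := \sqrt{|a_i|^2|\ti a_j|^2+|b_i|^2|\ti b_j|^2}$, all strictly positive by invertibility of the GP vectors. A direct computation then shows that both $(A_z \otimes A_{\ti z})K^{-1}$ and $(B_z \otimes B_{\ti z})K^{-1}$ act as generalised weighted shifts along the diagonal $\Z$-action $(i,j) \mapsto (i+1,j+1)$ on $\Z/r \times \Z/s$, with explicit weights $a_i \ti a_j k_{ij}$ and $b_i \ti b_j k_{ij}$. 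The key combinatorial step is then to decompose the index set into the orbits of that action: there are exactly $h = \textrm{hcf}(r,s)$ orbits, each of length $l = \textrm{lcm}(r,s)$. Each orbit spans an $l$-dimensional invariant subspace, and after cyclically reordering its basis the restriction of $m_z \boxtimes m_{\ti z}$ is manifestly a GP-module of P-dimension $l$, proving (1).

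Item (2) is immediate: every element of $\scrK_r$ is a finite direct sum of GP-modules $m_z$ with invertible $z$ of length $r$, and Lemma~\ref{lem:tensor-prod-prop}(2) distributes $\boxtimes$ over direct sums. Combined with (1), this expresses any element of $\scrK_r \boxtimes \scrK_s$ as a direct sum of GP-modules; invertibility of the resulting GP vectors is clear since each weight $a_i \ti a_j k_{ij}$, $b_i \ti b_j k_{ij}$ is nonzero. Since the total P-dimension is $rs$, the result lies in $\scrK_{rs}$.

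For (3), P-modules in $\scrK$ are full and their defining operators are invertible; by (2), $\scrK$ is closed under $\boxtimes$. The one non-routine step is the construction of duals: applying the formula $m^*=(U_A^\dag |B|^\dag, U_B^\dag |A|^\dag, \C^d)$ from Remark~\ref{rem:tensor-cat}(2), a short calculation identifies the dual of $m_z$ as a GP-module with weights $(\overline{a_i/|a_i|}\,|b_i|,\ \overline{b_i/|b_i|}\,|a_i|)_{i=1}^d$, which remain invertible. With duality established, the proof of Theorem~\ref{thm:tensor-category} adapts verbatim per Remark~\ref{rem:tensor-cat}(3): unitarity holds because \cite[Theorem~4.13]{Brothier-Wijesena23} forces morphisms between full finite-dimensional P-modules to intertwine adjoints, making endomorphism spaces $C^*$-algebras; the pivotal structure is inherited from finite-dimensional Hilbert spaces. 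The analogous statement for $(\Rep_{\scrK}(X),\boxtimes)$ follows by transporting structure along the equivalence of Theorem~\ref{theo:classification}, exactly as in the proof of Corollary~\ref{lettercor:tensor}. The main obstacle is the explicit orbit computation in (1); once that is in place, everything else reduces to previously-established categorical machinery.
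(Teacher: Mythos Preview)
Your proposal is correct and takes essentially the same approach as the paper: both compute $m_z \boxtimes m_{\ti z}$ as a weighted diagonal shift on the product basis $(e_i \otimes e_j)$ (the paper via polar decompositions and the $\star$-formulation of Proposition~\ref{prop:alt-tensor-product-def}, you by diagonalising $K$ directly) and then decompose along the $h$ orbits of $(i,j) \mapsto (i+1,j+1)$ on $\Z/r \times \Z/s$, deducing (2) and (3) from (1) together with Remark~\ref{rem:tensor-cat}(3). Your explicit verification that duals of invertible GP-modules remain invertible GP-modules is a detail the paper leaves implicit but which is indeed needed for rigidity of $\scrK$.
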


\begin{proof}
	First, observe that when the GP vector $z$ is invertible, then $A_z,B_z$ are invertible. Hence, the product $\boxtimes$ between two P-modules in $\scrK$ is well-defined.
	The second statement in the theorem follows from the first. Similarly, the third statement follows from the first and from the third item of Remark \ref{rem:tensor-cat}. Hence, we only need to show the first statement. Its proof follows a similar argument of computing various polar decompositions as in Proposition \ref{prop:atomic-diffuse-tensor}. Hence, we shall be brief in our reasoning.
	
	Let $z = ((a_1, b_1), \dots, (a_r, b_r))$, $\ti z = ((\ti a_1, \ti b_1), \dots, (\ti a_s, \ti b_s))$, and let $\alpha_i \in S_1$ be such that $a_i = \alpha_i\vert a_i \vert$, similarly defining $\beta_i, \ti \alpha_i, \ti \beta_i$. The polar decomposition of $A_z$ consists of the unitary generalised permutation matrix with $(i+1,i)$ entry being $\alpha_{i}$, and a positive diagonal matrix with diagonal entries $\vert a_i \vert$. We have a similar description for $B_z, \ti A_z, \ti B_z$.
	
	Set $(D,E,\C^{r} \otimes \C^s) := m_z \boxtimes m_{\ti z}$. Then from the above descriptions we deduce that 
	\[D(e_i \otimes e_j) = \alpha_i\alpha_j \vert a_i\vert \star \vert a_j\vert \cdot e_{i+1} \otimes e_{j+1},\ 
	E(e_i \otimes e_j) = \beta_i\beta_j \vert b_i\vert \star \vert b_j\vert \cdot e_{i+1} \otimes e_{j+1}\]
	where the indices in the first (resp.~second) component of the tensors is taken modulo $r$ (resp.~$s$). From there, it is easy to conclude for any vector $e_i \otimes e_j$ its span under words in $D,E$ forms a GP-module $\fK$ of dimension $l$ (and thus of P-dimension $l$ since every GP-module is full). This can be repeated for any other vector $e_m \otimes e_n \in \fK^\perp$ to yield another GP-module of dimension $l$. This concludes the proof of the theorem.
\end{proof}

Continuing to use the same notation as above, the proof demonstrates there is an easy way to determine the decomposition of $\kappa_z  \boxtimes \kappa_{\ti z}$. First take the vector $z$ and concatenate itself $d := l/r$ times so that it has length $l$. Similarly concatenate $\ti z$ by $e := l/s$ times. Each of their components $z_i, \ti z_j$ can be identified with P-modules in $\Irr_{\diff}(1)$ (hence, $z^d, z^e$ can be viewed as being in the Lie group $\Irr_{\diff}(1)^l \cong S_1^{2l} \times \R^l$ where the group multiplication is taking $\boxtimes$ component wise). 
Take the component wise tensor product $\boxtimes$ between the vectors $z^d$, $z^e$ and denote the resulting GP vector by $y_1$. Now cyclically permute $z^e$ by one position and again take component wise $\boxtimes$ with $z_d$ to give a GP vector $y_2$. Continue this $h$ times so that the GP vectors $y_1, \dots, y_h$ are obtained. Then we have
\[\kappa_z \boxtimes \kappa_{\ti z} \cong \oplus_{k=1}^h \kappa_{y_k}.\]
We now deduce explicit fusion rules.

\begin{example}
\begin{enumerate}
\item
Let $m,n$ be any two relatively prime positive integers. Take any two aperiodic invertible GP vectors $z,\ti z$ with length $m,n$, respectively. 
Their GP-representations $\kappa_z,\kappa_{\ti z}$ of $X$ are irreducible, and so is their tensor product $\kappa_z \boxtimes \kappa_{\ti z}$.
\item
Let $z = (z_1, z_2)$ be any aperiodic invertible GP vector (hence, $z_1,z_2 \in \Irr_{\diff}(1)$ and $z_1 \neq z_2$). Take $\ti z := z^{-1} = (z_1^{-1}, z_2^{-1})$. Then $\kappa_z \boxtimes \kappa_{\ti z} = \kappa_{y_1} \oplus \kappa_{y_2}$ where
\[y_1 = (\bone, \bone) \textrm{ and } y_2 = (z_1 \boxtimes z_2^{-1}, z_2 \boxtimes z_1^{-1}).\]
Hence, this gives an example of two irreducible inequivalent GP-representations whose tensor contains two copies of the unit representation $\Pi^X(\bone)$ (recall $\bone = (1/\sqrt{2}, 1/\sqrt{2}, \C)$ and $\Pi^F(\bone)$ is the classical Koopman representation of $F$). 
\item
If $y$ is an invertible GP vector of length four and there exists invertible GP vectors $z, \ti z$ of length smaller than four such that $\kappa_y \cong \kappa_z \boxtimes \kappa_{\ti z}$, then $z, \ti z$ must both have length two. This implies that $\kappa_y$ is necessarily reducible. This shows that if $x$ is an aperiodic invertible GP vector of length four, then $\kappa_x$ cannot be expressed as a direct sum or tensor product of GP-representations in $\scrK$ of lower P-dimension. 
		
For a less trivial example, consider invertible GP vectors $z,\ti z$ of length two and three, respectively. Let $\kappa_y = \kappa_z \boxtimes \kappa_{\ti z}$ where $y = (y_1, \dots, y_6)$. Observe that necessarily $y_1 \boxtimes y_3^{-1} = y_4 \boxtimes y_6^{-1}$. Hence, if $x$ is an aperiodic invertible GP vector of length six which does not satisfy the above relation (up to cyclic permutation) then the same conclusion can be made for $\kappa_x$ as in the preceding paragraph.
\item
Recall the Lie group action $\alpha_d$ of $G := \Irr_{\diff}(1)$ from Section \ref{subsec:one-dim-ex}. Identifying $\scrK_d$ with $G^d$, it is immediate $\alpha_d$ restricts to an action on $\scrK_d$.
If $z$ is aperiodic then so is $g\cdot z$. Moreover, if $z, \ti z$ are not cyclic permutations of each other then same is true for $g \cdot z, g \cdot \ti z$. Thus, $\alpha_d$ restricts to a free action on the irreducible classes of P-modules in $\scrK_d$.  
\end{enumerate}
\end{example}

\section{Comparison with other binary operations of representations}\label{sec:Kawamura}
Kawamura has defined a binary operation $\otimes_\varphi$ on representations of all the Cuntz algebras $\cO_2,\cO_3, \dots$ where $\cO_n$ denotes the usual Cuntz algebra generated by $n$ isometries \cite{Kawamura07}. 
The operation $\otimes_\varphi$ is constructed from the family of *-algebra embeddings
$$\varphi_{n,m}:\cO_{nm}\to \cO_{n}\otimes \cO_m,\  s_{m(i-1)+j} \mapsto t_i\ot u_j$$
where $s_k,t_i,u_j$ are the usual isometries generating $\cO_{nm},\cO_n,\cO_m$, respectively. This is well-defined thanks to the universality of $\cO_{nm}.$
This yields a binary operation
$$\otimes_\varphi:\Rep(\cO_n)\times \Rep(\cO_m)\to \Rep(\cO_{nm}),\ (\pi, \sigma)\mapsto (\pi\ot \sigma)\circ \varphi_{n,m}.$$

The operation $\otimes_\varphi$ is well-defined on equivalence classes of representations, associative (thanks to the specific choice of the bijection between generators) and distributive with respect to the direct sum. However, it is not symmetric (Kawamura provides an elementary counter-example using permutation representations \cite{Bratteli-Jorgensen-99}).

Similarly, for each $n\geq 2$ we have a Pythagorean algebra $\cP_n$ which is the universal $C^*$-algebra with generators $a_1,\dots,a_n$ and satisfying the relation $a_1^*a_1+\dots + a_n^*a_n=1$ \cite{Brothier-Jones19}.
Now, just as in the $n=2$ case, we can define a Pythagorean functor $$\Pi_n:\Rep(\cP_n)\to \Rep(\cO_n).$$

One can show that our Pythagorean process is compatible with the construction of Kawamura in the following sense.
Define $\mu_{n,m}:\cP_{nm}\to \cP_n\ot \cP_m$ similarly to Kawamura by using the generators of the various Pythagorean algebras.
This yields a binary operation $\otimes_\mu$ on the family of all representations of all Pythagorean algebras.
Now, composing before by $\Pi_{nm}$ or after by $\Pi_n\ot \Pi_m$ yields a commutative diagram up to isomorphism.

\newcommand{\etalchar}[1]{$^{#1}$}

\end{document}